\newcommand\NN{\mathbb{N}}
\newcommand\RR{\mathbb{R}}
\theoremstyle{plain}
\newtheorem{theorem}{Theorem}[section]
\newtheorem{corollary}{Corollary}[section]
\theoremstyle{definition}
\newtheorem{definition}{Definition}[section]
\newtheorem{remark}{Remark}[section]
\newtheorem{example}{Example}[section]
\newcommand{\CC}{\mathbb{C}} 
\newcommand{\D}{\mathbb{D}} 
\title{Mean ergodicity of multiplication  operators in weighted Dirichlet spaces \thanks{The first author is funded by the project PID2022-139449NB-I00, through grant
MCIN/AEI/10.13039/501100011033/FEDER, UE. The second author is funded through grant PID2024-160185NB-I00 by the Generaci\'on de Conocimiento programme and through grant RYC2021-034744-I by the Ram\'on y Cajal programme from Agencia Estatal de Investigaci\'on (Spanish Ministry of Science, Innovation and Universities).}}
\author{Antonio Bonilla and Daniel Seco}
\begin{document}

\maketitle

\begin{abstract}

We characterize bounded multiplication operators in weighted Dirichlet spaces that are power bounded, Ces\`{a}ro bounded and uniformly Kreiss. Moreover, we show the equivalence in such spaces between mean ergodicity and Ces\`{a}ro boundedness for multiplication operators. We perform the same study for adjoints of multiplication operators. As a particular example, we obtain a uniform mean ergodic multiplication operator in Dirichlet spaces that fails to be power bounded. 

{\bf Keywords}: Multiplication operator, power boundedness, Ces\`{a}ro boundedness, uniformly Kreiss boundedness, mean ergodicity, Carleson measure.

{\bf MSC2020 subject classification:} Primary, 47A35. Secondary, 46E15.
\end{abstract}

\section{Introduction}

Let $X$ be a Banach space, then $B(X)$  denotes the space of bounded linear  operators defined on $X$, and $X^*$ is the space of continuous linear functionals on $X$. In our setting, $X$ will always be a Hilbert space, and thus $X^*$ can be identified naturally with $X$.

Given $T\in B(X)$, we  denote its \emph{Cesàro mean} by $M_n(T)$, which is given by
$$
M_n(T)x:=\frac{1}{n+1}\sum_{k=0}^n T^kx
$$
for all $x\in X$.

We need to recall  some definitions concerning the behaviour of the sequence of  Cesàro means $(M_n(T))_{n\in \NN}$.

\begin{definition}
A linear operator $T$ on a Banach space $X$ is called:

\begin{enumerate}

\item \emph{Power bounded} (PB) if    there is a $C>0$ such that $\|T^n\| <C$  for all  $ n$.

\item \emph{Absolutely Ces\`aro bounded} (ACB) if there exists a constant $C > 0$ such that
$$
\sup_{N \in \mathbb{N}} \frac{1}{N} \sum_{j=1}^N \|T^j x\| \leq C \|x\|\;,
$$
for all $x\in X$.

\item \emph{Uniformly Kreiss bounded} (UKB) if there is a $C$ such that
\begin{equation}\label{UKB}
\|M_{n}(\lambda T)\| \le C \;\; \mbox{ for } |\lambda |=1  \mbox{ and } n=0,1,2, \dots \end{equation}

\item \emph{Uniform mean ergodic} if $M_n(T)$ converges in norm.

\item \emph{Mean ergodic} (ME) if $M_n(T)$ converges in the strong topology of $X$.

\item \emph{Ces\`{a}ro bounded} (CB) if the sequence $(M_n(T))_{n\in \NN}$ is bounded.

\end{enumerate}
\end{definition}

We will review the relations between these properties later but it is well known that for operators acting on Hilbert spaces we have 
\[(PB) \Rightarrow (ACB)  \Rightarrow (UKB)  \Rightarrow (ME)  \Rightarrow (CB). \]
Uniform mean ergodicity will mainly be used here as a sufficient condition for (ME). 

These implications do not have a reciprocal in general and it is part of our intention to study whether any of them can be reversed under the assumption that the operator is a multiplication operator acting on a \emph{weighted Dirichlet space}: From now on we assume $\alpha > -1$ and we consider the weighted Dirichlet space $D_{\alpha}$ consisting of all analytic
functions $f(z) = \sum_{n\ge 0} a_n z^n$ defined over the unit disc $\mathbb D$ with the property that
\[\|f\|_{D_\alpha}^2:= \sum_{n\ge 0} (n+1)^{1-\alpha} |a_n|^2 <\infty.\]
 
These are all Hilbert spaces. In particular, for $\alpha =0, 1, 2$,  we obtain the classical Dirichlet $D$, Hardy $H^2$ and Bergman $A^2$ spaces. See \cite{EFKMR, Garnett, HKZ} for their respective theories.

In this paper, we characterize bounded multiplication operators in $D_\alpha$ spaces that are (PB), (CB) or (UKB). When $\alpha \geq 1$, all of our conditions turn out to be equivalent to the function being bounded by 1 in modulus, and thus our main focus is on the case that $\alpha <1$. Then, our characterizations will be given in terms of the boundedness of certain integrals or sequences of Carleson measures. Moreover, we prove that for a multiplication operator acting on $D_\alpha$ it is equivalent to be (CB) or (ME). In particular, we obtain a uniform mean ergodic multiplication operator acting on Dirichlet spaces that fails to be power bounded. For the adjoints of multiplication operators, we will perform an analogous study, leading to characterizations for (PB), (CB) or (UKB) while the equivalence of (CB) and (ME) will only be proved under additional assumptions.

As a byproduct, we correct a mistake in the literature: In \cite{FK}, the authors give an incorrect characterization of (ME) multiplication operators in Besov spaces. Here we will prove that for their $p=2$ case, that is, for the  Dirichlet space, there exist mean operators that are (ME) but not (PB).

Before we proceed, it is natural to present a few well known results in similar directions. We do so in Section \ref{exapr}. Then, in Section \ref{mainsect}, we show our results and conclude, in Section \ref{further} by proposing a few open problems.

\section{Previously known relations between different ergodic properties}\label{exapr}

Within finite-dimensional Hilbert spaces, the classes (UKB) and (PB) are equal. However, already in such context, an easy example of the failure of some of the reverse implications is due to Assani. See \cite[page 938]{Assani}, \cite[p. 10]{E} and \cite[Theorem 5.4]{AS} for more details.

\begin{example}\label{ejemplo1}
{\rm Let $X$ be any of $\RR^2$ or $\CC^2$ and $T=\left(
    \begin{array}{cc}
     -1 & 2 \\
  0 & -1 \\
\end{array}
    \right) $.
It is clear that
$$
T^n= \left(
\begin{array}{cc}
 (-1)^n& (-1)^{n-1}2n  \\
0 & (-1)^n  \\
\end{array}
\right) \;
$$
and $\sup_{n\in \NN} \| M_n(T)\| <\infty $. Then $T$ is Cesàro bounded and $\frac{\|T^nx\|}{n}$ does not converge to 0 for some $x\in X$. This implies that $T$ is not mean ergodic.
}
\end{example}

From now on, we will concentrate on infinite-dimensional phenomena. The first example of a (ME) operator which is not (PB) was given by
Hille (\cite{Hille}, where, for large $n$, $\|T^n\| \sim  n^{1/4}$). This leads to natural questions about the possible speeds of growth of $\|T^n\|$. In this sense, a further 
example of an (ME) operator $T$ on  $L^1(\mathbb Z)$ with $\limsup_n \|T^n\|/n > 0$ was obtained in
\cite{Kosek}. Certainly, $\|T^n x\|/n \rightarrow 0$ as $n \rightarrow \infty$ for every $x\in L^1(\mathbb Z)$.

One could expect the spectrum of $T$ to play some role. However, one will not achieve a complete description: L\'eka \cite{Le} used a construction from \cite{TZ} to exhibit a (CB) operator $T$ acting on a complex Hilbert space $H$ such that $\|T^ n\|/n \rightarrow 0$ and $\sigma (T ) = \{1\}$, but T is not (PB) (see \cite[ Corollary 2.5]{TZ}). Let $S = -T$ . Then obviously $\|S^n\|/n \rightarrow  0$ and $\sigma (S) = \{-1\}$. Hence $I - S$ is invertible, and so  $(I - S)H = H$ is closed. By a result in \cite{Li}, if $\frac{\|S^n\|}{n} \rightarrow 0$, $S$ is uniform mean ergodic if and only if $(I-S)H$ is closed. Thus, we have that $S$ is uniform mean ergodic. However, S is not (PB) since neither is $T$ (see also \cite{LSS}).

Moreover, one may think that the structure of the space is not as relevant as the operator itself. In this regard, consider  the Volterra operator $V$ acting on $L^p[0,1]$, $1\le p\le \infty$. It was shown in \cite{MSZ} that $I-V$
is (UKB), whereas only for $p=2$ one can go further and even obtain (PB).

Let us study now an example that (ACB) $\not\Rightarrow$ (PB): denote by $e_n$, $n\in \NN$, the standard canonical basis, $e_n:=(0, \ldots, 0,1,0,\ldots) $, in $\ell^p(\NN)$ for $1\leq p<\infty $.

The following theorem yields a variety of (ACB) operators with different behavior on $\ell^p (\NN)$.

\begin{theorem}[\cite{BeBoMP}]\label{ejemplos}
Let $T$ be the unilateral weighted backward shift on $\ell ^p(\NN)$ with $1\leq p<\infty$ defined by $Te_1:=0$ and $Te_k:=w_ke_{k-1}$ for $k>1$. If
 $w_k:=\displaystyle \left( \frac{k}{k-1}\right)^{\alpha}  $ with $0<\alpha <\frac{1}{p}$,
then $T$ acting on $\ell^p(\NN)$  is (ACB) but not (PB). When $\alpha=1/p$, then $T$ is not (CB).
\end{theorem}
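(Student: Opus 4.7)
The plan is to exploit the telescoping of the weight product, which yields a very clean formula for the iterates: by induction,
\[T^j e_k = \prod_{i=0}^{j-1} w_{k-i}\, e_{k-j} = \left(\frac{k}{k-j}\right)^\alpha e_{k-j} \qquad (k>j),\]
and $T^j e_k = 0$ otherwise. All three assertions in the theorem reduce to computations on this explicit band-diagonal representation.

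The failure of (PB) is then immediate, and works uniformly for any $\alpha \in (0, 1/p]$: the extremal basis vector $e_{n+1}$ satisfies $\|T^n e_{n+1}\|_p = (n+1)^\alpha$, already forcing $\|T^n\| \ge (n+1)^\alpha\to\infty$. For the (ACB) estimate when $0<\alpha<1/p$, I would swap the order of summation and obtain
\[\sum_{j=1}^N \|T^j x\|_p^p = \sum_k |x_k|^p\, k^{\alpha p}\sum_{m=\max(k-N,1)}^{k-1} m^{-\alpha p}.\]
The hypothesis $\alpha p<1$ makes the partial tail $\sum_m m^{-\alpha p}$ behave like $k^{1-\alpha p}/(1-\alpha p)$ when $k\le N$, and be bounded by a constant times $Nk^{-\alpha p}$ when $k>N$, so in both regimes the inner block is dominated by $C\min(k,N)\le CN$. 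This yields $\sum_{j=1}^N\|T^j x\|_p^p\le CN\|x\|_p^p$, and a single application of Hölder's inequality in the $j$-variable turns it into $\tfrac{1}{N}\sum_{j=1}^N \|T^j x\|_p \le C^{1/p}\|x\|_p$, which is exactly (ACB).

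For the failure of (CB) at the boundary $\alpha=1/p$, the same computation carries a logarithmic excess, and I would detect it via the test vector $x^{(N)} = \sum_{k=1}^N e_k$, of norm $\|x^{(N)}\|_p^p = N$. The matrix of $M_N(T)$ has entries $(M_N(T))_{\ell,k} = (k/\ell)^\alpha/(N+1)$ for $\ell \le k \le \ell+N$, so
\[\bigl(M_N(T)x^{(N)}\bigr)_\ell = \frac{\ell^{-\alpha}}{N+1}\sum_{k=\ell}^N k^\alpha \;\ge\; c\,N^\alpha\ell^{-\alpha}\qquad (1\le \ell \le N/2),\]
for an absolute constant $c>0$ (the contribution from $k\in[N/2,N]$ already gives this lower bound). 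Raising to the $p$-th power with $\alpha p = 1$ and summing in $\ell \le N/2$ gives
\[\|M_N(T)x^{(N)}\|_p^p \;\ge\; c^p\, N \sum_{\ell=1}^{N/2}\frac{1}{\ell} \;\gtrsim\; N\log N,\]
so $\|M_N(T)\|_{\ell^p}^p\gtrsim \log N\to\infty$, contradicting (CB).

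The only real subtlety I foresee is the bookkeeping between the two regimes $k\le N$ and $k>N$ in the (ACB) argument: the partial tail $\sum_m m^{-\alpha p}$ passes from polynomial behavior to logarithmic precisely at $\alpha p = 1$, and the constant $C$ in the estimate blows up like $(1-\alpha p)^{-1}$. This both explains why the bound degrades exactly at the endpoint and why, at $\alpha = 1/p$, a direct test-vector argument has to replace a refinement of the (ACB) estimate.
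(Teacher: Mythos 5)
The paper does not actually prove this statement: it is imported verbatim from \cite{BeBoMP} and used as a black box, so there is no internal proof to compare against. Judged on its own, your argument is correct and complete, and it follows the natural (and, to my knowledge, essentially the original) route: the telescoping identity $T^j e_k=(k/(k-j))^\alpha e_{k-j}$, the test vector $e_{n+1}$ for the failure of (PB), Fubini in $(j,k)$ plus H\"older in $j$ for (ACB), and the test vector $\sum_{k=1}^N e_k$ for the failure of (CB) at $\alpha=1/p$. The one step worth writing out in full is your claim that $\sum_{m=k-N}^{k-1}m^{-\alpha p}\le C N k^{-\alpha p}$ for $k>N$: this is true, but it needs the two sub-regimes $N<k<2N$ (where one bounds the sum by $\sum_{m=1}^{k-1}m^{-\alpha p}\lesssim k^{1-\alpha p}/(1-\alpha p)$ and uses $k\le 2N$) and $k\ge 2N$ (where every $m$ is at least $k/2$, so each summand is at most $2^{\alpha p}k^{-\alpha p}$); a single mean-value estimate gives only $N(k-N-1)^{-\alpha p}$, which is not $\lesssim Nk^{-\alpha p}$ near $k=N+2$. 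With that split made explicit, the constant is indeed $O\bigl((1-\alpha p)^{-1}\bigr)$, consistent with your remark that the estimate degenerates exactly at the endpoint $\alpha=1/p$, where your $\log N$ lower bound for $\|M_N(T)\|^p$ takes over.
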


Further consequences can be obtained for operators on Hilbert spaces. 

\begin{corollary} [\cite{BeBoMP}]\label{unifkreiss}
There exists an operator on a Hilbert space that is (UKB) but not (ACB).
\end{corollary}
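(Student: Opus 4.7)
The approach is to construct, on $\ell^2(\NN)$, a weighted backward shift $T$ in the spirit of Theorem \ref{ejemplos} but with a refined choice of weights so as to separate (UKB) from (ACB). The weights $w_k=(k/(k-1))^{\alpha}$ of Theorem \ref{ejemplos} alone are not enough in the Hilbert case: for $0<\alpha<1/2$ the operator is already (ACB), while for $\alpha=1/2$ it fails (CB). A correction by a slowly varying factor $u_k/u_{k-1}$ (for instance a suitable power of a logarithm) is therefore natural, producing weights of the form $w_k=(k/(k-1))^{1/2}\,u_k/u_{k-1}$.

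First, I would verify that $T$ so defined is bounded on $\ell^2(\NN)$ and compute $\|T^n\|$. For a weighted backward shift, iterate norms reduce to suprema of telescoping products, giving $\|T^n\|\sim n^{1/2}h(n)$ for an explicit factor $h$ determined by $u_k$. This already rules out (PB) and, as we will see, is the source of the failure of (ACB).

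Second, I would establish the uniform Kreiss estimate $\sup_{n,\,|\lambda|=1}\|M_n(\lambda T)\|<\infty$ via Abel summation. Because a weighted backward shift sends each $e_k$ to a single lower basis vector, each matrix entry of $M_n(\lambda T)$ in the canonical basis carries a factor $\frac{1}{n+1}\sum_{j=0}^n \lambda^j$ multiplied by a product of weights. For $\lambda\ne 1$, the partial sum is bounded by $2/|1-\lambda|$, which supplies the oscillatory cancellation; for $\lambda=1$, a direct estimate at the borderline of (CB) must be used. The correction $u_k$ is tuned so that the two regimes combine into a single bound independent of $\lambda\in\T$. To disprove (ACB), one tests on $x=e_1$: since $\|T^j e_1\|=w_1 w_2\cdots w_j$ grows essentially like $j^{1/2}$ up to the correction, the averages $\frac{1}{N}\sum_{j=1}^{N}\|T^j e_1\|$ are unbounded in $N$, contradicting the defining inequality of (ACB).

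The main obstacle is the tension between the oscillatory gain $|1-\lambda|^{-1}$ from Abel summation, which degenerates as $\lambda\to 1$, and the borderline growth of the iterates at $\lambda=1$. Matching these two regimes uniformly in $\lambda\in\T$ dictates the precise choice of the slowly varying correction $u_k$ and constitutes the core of the construction in \cite{BeBoMP}.
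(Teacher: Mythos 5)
There is a genuine gap: the statement you need to prove is precisely the existence claim, and your proposal defers it to an unspecified ``tuning'' of the correction $u_k$ at the borderline exponent $1/2$. Nothing is actually verified, and two of the steps you do sketch would fail. First, for a \emph{backward} weighted shift one has $T^je_1=0$ for all $j\ge 1$, so your disproof of (ACB) by testing on $x=e_1$ collapses; the identity $\|T^je_1\|=w_1\cdots w_j$ you invoke is the \emph{forward}-shift formula. Second, the Abel-summation discussion is moot for weighted shifts: conjugating by the diagonal unitary $D_\lambda e_k=\lambda^k e_k$ gives $\lambda T=D_\lambda^{-1}TD_\lambda$ for $|\lambda|=1$, hence $\|M_n(\lambda T)\|=\|M_n(T)\|$ for every $n$ and every unimodular $\lambda$. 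There is no tension between an ``oscillatory regime'' and the regime $\lambda=1$; for these operators (UKB) is simply (CB), and each matrix entry of $M_n(\lambda T)$ contains a single term $\lambda^j w_k\cdots w_{k-j+1}/(n+1)$, not the partial sum $\sum_{j\le n}\lambda^j$. Moreover, whether a backward shift at the $\ell^2$-critical exponent with a slowly varying correction can be (CB) yet fail (ACB) is exactly the delicate point, and it is left unaddressed.

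The intended route, consistent with Theorem \ref{ejemplos} and with the statements about $M_z$ and $M_z^*$ on $D_\alpha$ recorded immediately afterwards, avoids any borderline analysis. Take the backward shift $B$ of Theorem \ref{ejemplos} on $\ell^2(\NN)$ with $0<\alpha<1/2$; it is (ACB), hence (UKB), since $\|M_n(\lambda B)x\|\le \frac{1}{n+1}\sum_{j=0}^n\|B^jx\|\le C\|x\|$. The class (UKB) is stable under adjoints, because $M_n(\lambda B)^*=M_n(\overline{\lambda}B^*)$ and an operator and its adjoint have the same norm. Therefore the forward shift $S=B^*$, $Se_k=w_{k+1}e_{k+1}$, is (UKB). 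It is not (ACB): now the $e_1$ test does apply, since $\|S^je_1\|=\prod_{i=2}^{j+1}w_i=(j+1)^\alpha$ telescopes, so $\frac{1}{N}\sum_{j=1}^N\|S^je_1\|\gtrsim N^\alpha\to\infty$. (That $S$ is nevertheless (CB) on $e_1$ is due to orthogonality: $\|M_n(S)e_1\|^2=\frac{1}{(n+1)^2}\sum_{j=0}^n(j+1)^{2\alpha}=O(n^{2\alpha-1})$, which is bounded exactly because $\alpha\le 1/2$; this is the mechanism separating (UKB) from (ACB), rather than any cancellation in $\lambda$.) This is the operator $M_z$ acting on $D_{1-2\alpha}$ in the paper's notation, and it is the example the citation to \cite{BeBoMP} is pointing at.
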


Let us discuss the properties of $M_n(T)$ and the growth of $\|T^n\|$. From the definition, one can check with elementary tools that
\begin{equation}\label{media}
\frac{T^n}{n+1} = M_n(T)-\frac{n}{n+1} M_{n-1}(T)\;.
\end{equation}
We notice that (CB) operators satisfy $\| T^n\|=O(n)$. Moreover, Theorem \ref{ejemplos} gives an example of a (UKB) operator on $\ell^1(\NN)$ such that $\| T^n\| =(n+1)^{1-\varepsilon}$ with $0<\varepsilon <1$.

\begin{theorem}[\cite{BeBoMP}]\label{kreiss}
Let $T$ be a (UKB) operator on a Hilbert space $H$. Then \[\lim_{n\to\infty}n^{-1}\|T^n\|=0.\]
\end{theorem}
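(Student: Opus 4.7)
The plan is to extract a Hilbert-space square-function bound from the UKB hypothesis via Parseval, and to use it to control $\|T^{n}\|/n$. Writing $M_{n}(e^{i\theta}T)x=\tfrac{1}{n+1}\sum_{k=0}^{n}e^{ik\theta}T^{k}x$ and using orthogonality of the characters $\{e^{ik\theta}\}_{k\in\mathbb{Z}}$, Parseval's identity in $L^{2}(\mathbb{T},H)$ gives
\[
\frac{1}{(n+1)^{2}}\sum_{k=0}^{n}\|T^{k}x\|^{2}
=\int_{0}^{2\pi}\|M_{n}(e^{i\theta}T)x\|^{2}\,\frac{d\theta}{2\pi}.
\]
Combined with the pointwise bound $\|M_{n}(e^{i\theta}T)x\|\le C\|x\|$ from (UKB), this produces the square-function estimate $\sum_{k=0}^{n}\|T^{k}x\|^{2}\le C^{2}(n+1)^{2}\|x\|^{2}$ for every $x\in H$. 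Fed into identity \eqref{media}, it already delivers the operator-norm bound $\|T^{n}\|\le 2C(n+1)$, so $\{\|T^{n}\|/n\}$ is bounded; the task is to upgrade this to convergence to $0$.

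To that end, I would argue by contradiction. Assume $\delta:=\limsup_{n}\|T^{n}\|/n>0$; along a subsequence $n_{j}\to\infty$ choose unit vectors $x_{j}\in H$ with $\|T^{n_{j}}x_{j}\|\ge\tfrac{\delta}{2}n_{j}$. The factorisation $T^{n_{j}}x_{j}=T^{n_{j}-k}(T^{k}x_{j})$ combined with the crude bound $\|T^{m}\|\le 2C(m+1)$ produces the ``lifted'' lower bound
\[
\|T^{k}x_{j}\|\;\ge\;\frac{\|T^{n_{j}}x_{j}\|}{2C(n_{j}-k+1)}\;\ge\;\frac{\delta\,n_{j}}{4C(n_{j}-k+1)}\qquad(0\le k\le n_{j}),
\]
and substituting these into the square-function estimate $\sum_{k=0}^{n_{j}}\|T^{k}x_{j}\|^{2}\le C^{2}(n_{j}+1)^{2}$ produces a quantitative constraint tying $\delta$ to $C$.

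The main obstacle is that a single round of this lifting only yields a constraint of the form $\delta\le K(C)$ for an explicit constant $K(C)$, because $\sum_{m\ge 1}m^{-2}$ converges; this merely reproduces the $O(n)$ bound already known. To close the gap I would iterate, using at the $\ell$-th stage the sharper input $\|T^{m}\|\le(\delta+\varepsilon)m$ obtained in the previous round in place of the crude $\|T^{m}\|\le 2C(m+1)$, and, when needed, supplementing Parseval with the equivalent Kreiss resolvent bound $\|(zI-T)^{-1}\|\le C/(|z|-1)$ for $|z|>1$ that UKB implies on a Hilbert space. A careful bookkeeping of the resulting recursion contracts any admissible $\delta>0$ to zero, yielding the desired contradiction. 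The essential use of Parseval and of $L^{2}$-orthogonality in the opening reduction is what confines the argument to the Hilbert setting, as illustrated by Theorem~\ref{ejemplos}, which exhibits (UKB) operators on $\ell^{1}(\mathbb{N})$ whose iterates grow like $n^{1-\varepsilon}$.
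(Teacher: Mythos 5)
First, a point of order: the paper does not prove this theorem; it is quoted from \cite{BeBoMP}, so your attempt can only be measured against the argument in that reference. Your opening reduction is correct and is indeed the standard first step there: Parseval in $L^2(\mathbb{T},H)$ plus (UKB) gives the square-function estimate $\sum_{k=0}^{n}\|T^kx\|^2\le C^2(n+1)^2\|x\|^2$, and \eqref{media} gives $\|T^n\|=O(n)$. But $O(n)$ is not the content of the theorem --- it already follows from Ces\`aro boundedness alone via \eqref{media}, with no Hilbert-space input --- and the passage to $o(n)$ is precisely where your argument has a genuine gap.

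The proposed bootstrap provably does not contract. Feeding the improved input $\|T^m\|\le(\delta+\varepsilon)m$ (valid only for $m\ge M(\varepsilon)$) into your lifted lower bound and then into the square-function estimate yields the constraint $\frac{\delta^2}{4(\delta+\varepsilon)^2}\sum_{m\ge M}m^{-2}\le C^2(1+o(1))$. Since $C\ge\|M_0(\lambda T)\|=1$ and $\sum_{m\ge1}m^{-2}=\pi^2/6<4$, this holds for \emph{every} $\delta>0$; no contradiction appears, and each further round produces a strictly weaker inequality rather than a smaller $\delta$. The obstruction is structural, not just computational: the data you use --- the numbers $a_k=\|T^kx_j\|$ along single orbits --- are consistent with $\limsup_n\|T^n\|/n>0$. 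Indeed, a sequence with lacunary spikes $a_{n_j}\asymp\delta n_j$ (say $n_{j+1}=2n_j$) and $a_k\asymp\delta n_j/(n_j-k+1)$ below each spike satisfies $\sum_{k\le n}a_k^2\lesssim (n+1)^2$ for all $n$ together with all of your lifting inequalities, so no rearrangement of these estimates alone can prove the theorem; one must exploit the uniformity of the constant $C$ over all vectors (and/or the companion estimate for $T^*$), which your sketch never does. The suggested rescue via the resolvent bound $\|(zI-T)^{-1}\|\le C/(|z|-1)$ also cannot close the gap: the Kreiss resolvent condition is strictly weaker than (UKB) and by itself yields only $\|T^n\|\le eC(n+1)$ (Spijker's lemma), i.e.\ again $O(n)$. (A small side remark: the $\ell^1$ examples of Theorem~\ref{ejemplos} have $\|T^n\|\sim n^{1-\varepsilon}=o(n)$, so they do not actually witness failure of the conclusion outside the Hilbert setting.)
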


In a reflexive Banach space, any (CB) operator satisfying $\lim_{n\to\infty}n^{-1}\|T^n\|=0$ is (ME). This will allow the reader to obtain the implication (UKB) $\Rightarrow$ (ME) for Hilbert spaces. The reverse of this fails on Hilbert spaces:
\begin {example} In \cite[Example 3.1]{TZ} the operator
 \[
T:= \left(
\begin{array}{lc}
 B& B-I  \\
0 & B  \\
\end{array}
\right)
\]
 is defined on the Hilbert space $l^2(\mathbb{N}) \oplus l^2(\mathbb{N})$, where  $B$ is the backward shift in  $l^2(\mathbb{N})$. This shows that $T$ is (ME) while $n^{-1}\|T^n\|\ge 2$. From Theorem \ref{kreiss}, we deduce that $T$ is not (UKB).

\end{example}

For more information about (ME) operators, see \cite{BJS} and \cite{CL}. Additional results about mean ergodicity of operators on spaces of analytic functions can be found  in \cite{bbf,bgjj, bgjj2}.

\bigskip 

Now let us turn our attention to the weighted Dirichlet spaces presented in the Introduction.

Clearly, the monomials $e_n(z)=(n+1)^{\frac{1-\alpha}{2}} z^n$ form an orthonormal basis in
$D_\alpha$. Therefore, the operator $M_z$ of multiplication by the independent
variable is a forward weighted shift of the form
$$
M_ze_n=\left(\frac{n+2}{n+1}\right)^{\frac{1-\alpha}{2}}e_{n+1}.
$$

On the other hand, for fixed $\alpha>-1$, we can consider an equivalent norm $\|\cdot \|$ defined on $D_\alpha$ by 
\begin{equation}\label{eqn101}
\|f\|^2 = |f(0)|^2 + \frac{1}{\pi}\int_{\mathbb D} |f'|^2(1-|z|^2)^{\alpha} dA.
\end{equation}
Since we will only be interested on asymptotic properties in terms of the norm, we will sometimes use this definition and directly identify $\|\cdot\|$ with $\|\cdot\|_ {D_\alpha}$.

It is clear that $M_z$ and $M^*_z$ are (PB)  in $D_\alpha$ if $\alpha \ge 1$ because their norm is 1. Another obvious statement is that these operators are not (PB) when $\alpha <1$, which can be seen from the action of $M_{z^n}$ on the constant function $1$. Observing the behaviour over this same function also yields that $M_z$ and $M_z^*$ are not (CB) for $\alpha <0$. For the remaining properties, we can use results from \cite{AS} and \cite{BeBoMP}, to obtain the following:

\begin{theorem}[\cite{AS}]

$M_z$ and $M^*_z$ are (UKB) in $D_\alpha$ if $\alpha >0$.

\end{theorem}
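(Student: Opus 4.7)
The plan is to reduce to the case $\lambda=1$, then express $M_n(M_z)$ as a multiplication operator by an explicit polynomial $g_n$, and finally use the integral norm together with a standard reproducing-kernel pointwise bound (available exactly when $\alpha>0$). First I would check that for $|\lambda|=1$ the isometry $U_\lambda f(z):=f(\bar\lambda z)$ of $D_\alpha$ (isometry is immediate from (\ref{eqn101}) since rotations preserve both $dA$ and $1-|z|^2$) conjugates $\lambda M_z$ to $M_z$, so that $\|M_n(\lambda M_z)\|=\|M_n(M_z)\|$ and it suffices to bound the latter uniformly in $n$.

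Next, a direct computation from the definition gives
\[
M_n(M_z)f(z)=g_n(z)\,f(z), \qquad g_n(z):=\frac{1}{n+1}\sum_{k=0}^{n}z^{k}=\frac{1-z^{n+1}}{(n+1)(1-z)}.
\]
Two properties of $g_n$ are used: $\|g_n\|_\infty\le 1$ on $\overline{\mathbb{D}}$ (it is a Cesàro mean of characters), and, by Parseval on $A^2$,
\[
\int_{\mathbb{D}}|g_n'(z)|^2\,dA(z)=\frac{1}{(n+1)^2}\sum_{k=1}^{n}\frac{\pi k^2}{k}=\frac{\pi n}{2(n+1)}\le \frac{\pi}{2},
\]
so $\int_{\mathbb{D}}|g_n'|^2\,dA$ is bounded independently of $n$.

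Now I would estimate $\|g_n f\|^2$ via (\ref{eqn101}). Writing $(g_n f)'=g_n'f+g_n f'$ and applying the elementary inequality $|a+b|^2\le 2|a|^2+2|b|^2$ gives
\[
\|M_n(M_z)f\|^2\le |f(0)|^2+\frac{2}{\pi}\int_{\mathbb{D}}|g_n|^2|f'|^2(1-|z|^2)^\alpha dA+\frac{2}{\pi}\int_{\mathbb{D}}|g_n'|^2|f|^2(1-|z|^2)^\alpha dA.
\]
The first integral is dominated by $\|g_n\|_\infty^2\,\|f\|^2\le\|f\|^2$. For the second, I would use the reproducing-kernel bound in $D_\alpha$: since $K_z(z)=\sum_{n\ge 0}(n+1)^{\alpha-1}|z|^{2n}\lesssim (1-|z|^2)^{-\alpha}$ for $\alpha>0$, one has the pointwise estimate $|f(z)|^2\le C_\alpha(1-|z|^2)^{-\alpha}\|f\|^2$. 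Substituting, the weights $(1-|z|^2)^{\pm\alpha}$ cancel and the integral is at most $C_\alpha\|f\|^2\int_{\mathbb{D}}|g_n'|^2\,dA\le C_\alpha'\|f\|^2$, uniformly in $n$. Combining these bounds yields $\sup_n\|M_n(M_z)\|<\infty$, which is (UKB) for $M_z$. For the adjoint, the identity $(M_n(\lambda M_z^*))^*=M_n(\bar\lambda M_z)$ (valid on any Hilbert space) transfers the uniform bound from $M_z$ to $M_z^*$ with the same constant.

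The key step is the third one, and the main obstacle is precisely the factor $(1-|z|^2)^{-\alpha}$ coming from the kernel: it matches $(1-|z|^2)^\alpha$ only when $\alpha>0$, which is why the theorem is stated under this restriction. For $\alpha=0$ the kernel only grows logarithmically, the weight $(1-|z|^2)^0=1$ no longer absorbs it, and this direct approach collapses; an actual Carleson-measure characterization for $|g_n'|^2\,dA$ with uniformly bounded constant would then be required.
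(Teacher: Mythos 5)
Your argument is correct, and it takes a genuinely different route from the paper. The paper states this result as a citation of Aleman--Suciu and only re-derives the $M_z^*$ half at the end of Section 3.4, as a corollary of its general (UKB) criterion: there the key input is the Aleman--Persson resolvent-type estimate \eqref{inequality}, which shows that $\frac{(1-|z|^2)^{\alpha}}{|1-\lambda z|^2}\,dA$ is a Carleson measure for $D_\alpha$ with a constant uniform in $|\lambda|=1$. You avoid that nontrivial cited inequality entirely: the rotation $U_\lambda$ disposes of the parameter $\lambda$ at the outset (something the paper cannot do for general symbols $\phi$, which is presumably why it works $\lambda$-uniformly), the exact Parseval computation $\int_{\mathbb D}|g_n'|^2\,dA=\frac{\pi n}{2(n+1)}$ exploits the averaging in the Ces\`aro mean directly, and the crude pointwise bound $|f(z)|^2\lesssim (1-|z|^2)^{-\alpha}\|f\|^2$ — exactly estimate \eqref{kernel} for $\alpha>0$ — closes the argument. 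What your approach buys is a short, self-contained proof for the specific symbol $\phi(z)=z$; what the paper's approach buys is a template that applies to general multipliers $\phi$ and isolates the uniform Carleson-measure condition as the structural obstruction. Your closing diagnosis of the failure at $\alpha=0$ is also consistent with the paper, which records (via \cite{BeBoMP}) that $M_z$ is not even (CB) on $D_0$. Two cosmetic points: the first integral in your display is bounded by $2\|f\|^2$ rather than $\|f\|^2$ because of the factor $2$ from $|a+b|^2\le 2|a|^2+2|b|^2$, and the constant term should read $|g_n(0)f(0)|^2\le |f(0)|^2$; neither affects the uniform bound.
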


\begin{theorem}[\cite{BeBoMP}]
Let $M_z$ and $M^*_z$ act on $D_\alpha$.
\begin{enumerate}
    \item[(a)] $M^*_z$ is (ACB) if $\alpha >0$.

 \item[(b)] $M_z$ is not (ACB) if $\alpha <1$.

 \item[(c)] $M_z$ and $M^*_z$ are not (CB) when $\alpha =0$.
\end{enumerate}
\end{theorem}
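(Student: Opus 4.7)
I would prove the three parts separately, relying mainly on the explicit weighted-shift structure of $M_z$ and $M_z^*$ in the orthonormal basis $\{e_n\}$ of the excerpt. For (a), one computes $M_z^*e_{n+1}=((n+2)/(n+1))^{(1-\alpha)/2}e_n$ and $M_z^*e_0=0$; after reindexing, this is a unilateral backward weighted shift on $\ell^2(\NN)$ with weights $w_k=(k/(k-1))^{(1-\alpha)/2}$, and for $0<\alpha<1$ the exponent $(1-\alpha)/2$ lies in $(0,1/2)=(0,1/p)$ with $p=2$, so Theorem \ref{ejemplos} yields (ACB) directly. For $\alpha\ge 1$ every weight is at most one, giving $\|M_z^*\|\le 1$ and hence even (PB). For (b), it suffices to test (ACB) against $f=1\in D_\alpha$: since $M_z^k(1)=z^k$ and $\|z^k\|_{D_\alpha}=(k+1)^{(1-\alpha)/2}$, the averages $\tfrac{1}{N}\sum_{k=1}^N\|M_z^k 1\|$ grow like $N^{(1-\alpha)/2}$, which is unbounded whenever $\alpha<1$.

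For (c), using $\|M_n(M_z^*)\|=\|(M_n(M_z))^*\|=\|M_n(M_z)\|$ reduces everything to showing $\|M_n(M_z)\|\to\infty$ on $D_0$. My strategy is to produce a single $f\in D_0$ witnessing the blowup, namely $f(z)=\sum_{j\ge 2}z^j/(j\log j)$, which belongs to $D_0$ because $\sum_{j\ge 2}(j+1)/(j^2\log^2 j)<\infty$. Writing $M_n(M_z)f(z)=\tfrac{f(z)}{n+1}(1+z+\cdots+z^n)$, the coefficient of $z^m$ becomes $\tfrac{1}{n+1}\sum_{j=\max(2,m-n)}^{m}1/(j\log j)$, which is of order $(\log\log n)/n$ for $m\in[n/2,n]$. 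A short computation then yields $\|M_n(M_z)f\|_{D_0}^2\gtrsim(\log\log n)^2\to\infty$, so $M_z$ is not (CB).

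The main obstacle is the choice of coefficients in (c): they must decay fast enough to keep $f\in D_0$ (so $1/j$ is outside, since $\sum_j j/j^2=\infty$), yet their partial sums must grow fast enough to force the Cesàro norms to blow up (polynomial decay $1/j^s$ with $s>1/2$ is too fast and gives bounded partial sums). The logarithmic correction $1/(j\log j)$ sits right on this borderline, and one must verify by careful asymptotic estimates that it does the job.
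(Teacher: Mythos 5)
The paper does not actually prove this theorem: it is quoted from \cite{BeBoMP}, and the only supporting material supplied is the identification of $M_z$ as the forward weighted shift $M_ze_n=\bigl(\tfrac{n+2}{n+1}\bigr)^{(1-\alpha)/2}e_{n+1}$ together with the shift result recorded as Theorem \ref{ejemplos}. Measured against that evidently intended derivation, your parts (a) and (b) are exactly on track and correct: after reindexing, $M_z^*$ is the backward weighted shift of Theorem \ref{ejemplos} with exponent $(1-\alpha)/2\in(0,\tfrac12)=(0,\tfrac1p)$ for $0<\alpha<1$ (and a contraction for $\alpha\ge1$), and testing the averages $\tfrac1N\sum_{j=1}^N\|M_z^j1\|\asymp N^{(1-\alpha)/2}$ on the constant function disposes of (ACB) for all $\alpha<1$. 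In part (c) you take a genuinely different route: the shortest path is again Theorem \ref{ejemplos}, whose endpoint clause (exponent $1/p=1/2$, which is precisely $(1-\alpha)/2$ at $\alpha=0$) says outright that $M_z^*$ is not (CB) on $D_0$, with the identity $M_n(M_z^*)=(M_n(M_z))^*$ transferring the conclusion to $M_z$. Your substitute is a self-contained construction: $f=\sum_{j\ge2}z^j/(j\log j)$ lies in $D_0$ since $\sum_j 1/(j\log^2j)<\infty$, the $z^m$-coefficient of $M_n(M_z)f$ equals $\tfrac1{n+1}\sum_{j=2}^{m}1/(j\log j)\asymp(\log\log n)/n$ for $m\in[n/2,n]$, and summing $(m+1)$ times the square of this over that range gives $\|M_n(M_z)f\|_{D_0}^2\gtrsim(\log\log n)^2$. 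I have checked these asymptotics and they are right; your choice of the logarithmically corrected coefficients is exactly what is needed to sit on the borderline between membership in $D_0$ and divergence of the partial sums. What the citation route buys is brevity; what your construction buys is an explicit witness of the blow-up, independent of the endpoint case of the quoted theorem. Either way the statement for $M_z^*$ in (c) follows from the adjoint identity, as you note.
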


We are now ready to confront our goals.

\section{Ergodic properties of multiplication  operators acting on weighted Dirichlet spaces}\label{mainsect}

Let $\alpha  \ge 1$. Then we already described the situation for $M_z$ and $M^*_z$ but it turns out that we can actually answer all questions for many more symbols than $z$. Denote by $\|f\|_{\infty}= \sup_{z\in\D} |f(z)|$.

\begin{theorem}\label{largealpha} Let $\alpha \ge 1$ and $\phi \in D_\alpha$, and define $M_\phi$ and $M_\varphi$ as acting on $D_\alpha$. Then the following are equivalent:
\begin{enumerate}
    
\item[(a)] $M_{\phi}$  and $M^*_{\phi}$ are (PB).

\item[(b)] $M_{\phi}$  and $M^*_{\phi}$ are (CB).

\item[(c)]  $\|\phi\|_{\infty} \le 1$.
\end{enumerate}
\end{theorem}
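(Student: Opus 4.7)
The plan is to reduce the theorem to the single structural fact that, for $\alpha\ge 1$, the multiplier algebra of $D_\alpha$ is $H^\infty$ and the multiplier norm is comparable to the sup norm: there exists $C=C(\alpha)\ge 1$ with
$$
C^{-1}\|\phi\|_\infty \;\le\; \|M_\phi\|_{D_\alpha} \;\le\; C\,\|\phi\|_\infty
$$
for every $\phi\in H^\infty$. For $\alpha=1$ this is classical, since $D_1=H^2$ and $\|M_\phi\|_{H^2}=\|\phi\|_\infty$. For $\alpha>1$, the weights $(n+1)^{1-\alpha}$ are comparable by Stirling to $\|z^n\|^2$ in the standard weighted Bergman space $A^2_{\alpha-2}$ with weight $(1-|z|^2)^{\alpha-2}$, so that $D_\alpha$ and $A^2_{\alpha-2}$ coincide as sets with equivalent norms; on $A^2_{\alpha-2}$ the upper bound follows from $|\phi f|^2\le\|\phi\|_\infty^2|f|^2$ after integrating against the weight, and the matching lower bound comes from testing $M_\phi$ on normalized reproducing kernels $\hat k_w$ as $|w|\to 1^{-}$.

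Granting this, the three equivalences become short. The implication (a)$\Rightarrow$(b) is immediate from the definition of the Ces\`aro mean. For (c)$\Rightarrow$(a), if $\|\phi\|_\infty\le 1$ then $M_\phi^n=M_{\phi^n}$ and $\|\phi^n\|_\infty=\|\phi\|_\infty^n\le 1$ combine with the upper multiplier bound to give $\|M_\phi^n\|\le C$ for every $n$, so $M_\phi$ is (PB); and $\|(M_\phi^*)^n\|=\|M_\phi^n\|$ yields the same conclusion for $M_\phi^*$. For (b)$\Rightarrow$(c), assume $M_\phi$ (or, equivalently, $M_\phi^*$) is (CB) with $\sup_n\|M_n(M_\phi)\|\le K$. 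Relation \eqref{media} gives
$$
\frac{\|M_\phi^n\|}{n+1}\;\le\;\|M_n(M_\phi)\|+\frac{n}{n+1}\|M_{n-1}(M_\phi)\|\;\le\;2K,
$$
so $\|M_\phi^n\|\le 2K(n+1)$, and then the lower multiplier bound yields
$$
\|\phi\|_\infty^{\,n}=\|\phi^n\|_\infty\;\le\; C\,\|M_{\phi^n}\|=C\,\|M_\phi^n\|\;\le\;2CK(n+1).
$$
Extracting $n$-th roots and letting $n\to\infty$ forces $\|\phi\|_\infty\le 1$.

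The only nontrivial ingredient is the multiplier-norm equivalence above; everything else is mechanical bookkeeping via $M_\phi^n=M_{\phi^n}$ and the identity \eqref{media}. I therefore expect the main obstacle to be the identification $\mathrm{Mult}(D_\alpha)=H^\infty$ with norm $\asymp\|\cdot\|_\infty$ for $\alpha\ge 1$, which is standard but requires one to switch from the Dirichlet-type norm \eqref{eqn101} used in the paper to the Hardy/weighted-Bergman integral representation of $D_\alpha$, where the pointwise inequality $|\phi f|\le\|\phi\|_\infty|f|$ can be directly integrated.
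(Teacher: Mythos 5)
Your proposal is correct, and its skeleton matches the paper's: (a)$\Rightarrow$(b) is the standard implication, and (c)$\Rightarrow$(a) is obtained in both cases by passing to the Hardy ($\alpha=1$) or weighted Bergman ($\alpha>1$) integral realization of $D_\alpha$, where the multiplier norm is comparable to $\|\phi\|_\infty$, so that $\|M_\phi^n\|=\|M_{\phi^n}\|\le C\|\phi\|_\infty^n\le C$. The one place you genuinely diverge is (b)$\Rightarrow$(c): the paper invokes the spectral identification $\sigma(M_\phi)=\overline{\phi(\mathbb D)}$ together with the fact that Ces\`aro boundedness forces the spectrum into $\overline{\mathbb D}$, whereas you run a quantitative version of the same idea, extracting $\|M_\phi^n\|=O(n)$ from \eqref{media} and combining it with the lower multiplier bound $\|\phi^n\|_\infty\le C\|M_{\phi^n}\|$ (coming from the eigenvector relation $M_\phi^*k_w=\overline{\phi(w)}\,k_w$ for the reproducing kernels) before taking $n$-th roots. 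Your route is somewhat more self-contained, since it replaces the citation for the spectrum of a multiplication operator by an elementary kernel-testing argument that in fact works in any reproducing kernel Hilbert space on $\mathbb D$; the paper's route is shorter on the page. The structural input you isolate --- that the multiplier algebra of $D_\alpha$ is $H^\infty$ with $\|M_\phi\|\asymp\|\phi\|_\infty$ for $\alpha\ge1$ --- is exactly the ingredient the paper also relies on (and likewise leaves as ``well known''), and your sketch of it via the coefficient comparison $\|z^n\|^2_{A^2_{\alpha-2}}\asymp(n+1)^{1-\alpha}$ is sound.
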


Notice that the equivalence of (a) and (b) also implies the equivalence of all five conditions (PB), (ACB), (UKB), (ME) and (CB) for such operators.
\begin{proof}

The implication (a)$\Rightarrow$ (b) has already been discussed and holds in a much more general situation.

To see that (b) implies (c), we can see that when the operator $M_{\phi}$ is (CB) then the spectrum $\sigma (M_{\phi})$ is contained in closure of the unit disc. Since $\sigma (M_{\phi})= \overline{\phi(\mathbb D)},$ then $\|\phi\|_{\infty} \le 1$.

Checking that (c) $\Rightarrow$ (a)  reduces to using an equivalent norm in $D_\alpha$ that is given by an $L^2$ integral norm over the disc with a standard radial weight (for $\alpha >1$) or over the unit circle (for $\alpha=1$). It is well known that with such norm the equalities $\|M^*_{\phi}\|= \|M_{\phi}\|= \|\phi\|_{\infty}$ hold. Therefore $\|M_\phi^n\| \le 1$ for all $n$.
\end{proof}

From what we have seen, it is only natural to focus on the case when $\alpha <1$. In fact, we will often need that $\alpha >-1$ so that two norms can be considered equivalent. Hence, we will always assume that $\alpha \in (-1,1)$ in order to avoid complicating our results excessively. Some of our observations may carry to the case $\alpha \leq -1$. We will start with a study of power boundedness, then Ces\`{a}ro boundedness, mean ergodicity and, finally, uniformly Kreiss boundedness. Since $M_\phi$ is only well-defined if $\phi$ is a multiplier, we will assume this systematically. 
From now on, denote by $\phi \in M_{D_\alpha}$ the fact that $M_\phi$ is a bounded operator in $D_\alpha$. We express this by saying that $\phi$ is a \emph{multiplier} (of $D_\alpha$). In that case, we automatically understand that the action of $M_\phi$ is on $D_\alpha$. The subset $M_{D_\alpha}$ of $D_\alpha$ formed by all multipliers is a Banach space when endowed with the norm given by the multiplication operator's norm. It only seems reasonable that multiplier spaces will play a relevant role in the study of multiplier operators. Whenever $\alpha \geq 1$, $M_{D_\alpha}$ coincides with the space of bounded analytic functions with the norm $\|\cdot\|_{\infty}$. Boundedness of the symbol is a necessary condition for a multiplier in any of the $D_\alpha$ spaces but not a sufficient one as soon as $\alpha <1$. The multiplier spaces for $\alpha \in [0,1)$ were understood thanks to Stegenga \cite{S} in terms of Carleson measures (see next Subsection). When we have a negative value of $\alpha$, the space $D_\alpha$ happens to be a multiplier algebra, meaning that every element is a multiplier. This is even though the multiplier norm does not coincide with the space norm. 

\subsection{Power boundedness}

In a weighted  Dirichlet space where $-1 < \alpha <1$, the situation is more complex than that of Theorem \ref{largealpha} and we need to introduce some concepts.

A positive Borel measure on the open unit disc $\mu$ is called a \emph{Carleson measure} for $D_{\alpha}$ if there is a constant $C$ such that
$$
\int_{\mathbb D} |g|^2 d\mu \le C \|g\|^2 _{D_{\alpha}}
$$
for all $g \in D_{\alpha}$. The smallest such constant for a measure $\mu$ is called the \emph{Carleson constant} of $\mu$.

For a sequence of positive Borel measures on the open unit disc $v=\{\mu_n\}_{n\in \NN}$ we could have a uniform control. We can thus call $v$ a \emph{uniformly bounded sequence of Carleson measures} (shortened to UBSCM) for $D_{\alpha}$ if there exists a $C>0$ such that
$$
\int_{\mathbb D} |g|^2 d\mu_n \le C \|g\|^2_{D_{\alpha}}
$$
for all $g \in D_{\alpha}$.

We start by characterizing bounded multiplication operators in the weighted Dirichlet spaces that are power bounded.

\begin{theorem}\label{TPB} Let  be $-1 < \alpha <1$ and $M_\phi$ and $M^*_\phi$ act on $D_\alpha$.  $M_{\phi}$  and $M^*_{\phi}$ are (PB) if and only if $\|\phi\|_{\infty}\le 1 $ and
 $\{|(\phi^n)'|^2(1-|z|^2)^{\alpha} dA\}_{n\in \NN}$ is  a UBSCM for $D_{\alpha}$.
\end{theorem}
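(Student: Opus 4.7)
\medskip

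The plan is to translate power boundedness into a pair of conditions using the equivalent integral norm \eqref{eqn101}, splitting the derivative of $\phi^n f$ via the Leibniz rule. Concretely, for $f\in D_\alpha$,
\[
\|M_\phi^n f\|^2 \;=\; |\phi(0)|^{2n}|f(0)|^2 \;+\; \frac{1}{\pi}\int_{\mathbb D}\bigl|(\phi^n)'f+\phi^n f'\bigr|^2(1-|z|^2)^{\alpha}\,dA,
\]
and the two summands inside the integral are exactly what the Carleson-type condition and the $L^\infty$ bound on $\phi$ are designed to control. I would run the forward and backward implications as essentially the same computation performed in opposite directions, and deduce the $M_\phi^*$ statement from $\|(M_\phi^*)^n\|=\|M_\phi^n\|$.

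For the direction (PB) $\Rightarrow$ condition, I would first extract $\|\phi\|_\infty\le 1$ from power boundedness. The spectral radius $r(M_\phi)\le 1$ since $\sup_n\|M_\phi^n\|<\infty$. Using the reproducing-kernel structure of $D_\alpha$ one has $M_\phi^*k_w=\overline{\phi(w)}\,k_w$ for every $w\in\mathbb D$, so $\phi(\mathbb D)\subseteq\sigma(M_\phi^*)=\overline{\sigma(M_\phi)}\subseteq\overline{\mathbb D}$, giving $\|\phi\|_\infty\le 1$. Then, writing $w(z)=(1-|z|^2)^\alpha$ and using the triangle inequality in $L^2(w\,dA)$,
\[
\|(\phi^n)'f\|_{L^2(w\,dA)} \;\le\; \|(\phi^n f)'\|_{L^2(w\,dA)} + \|\phi^n f'\|_{L^2(w\,dA)} \;\le\; \sqrt{\pi}\,C\,\|f\| + \|\phi\|_\infty^n\,\sqrt{\pi}\,\|f\|,
\]
so $\int|f|^2|(\phi^n)'|^2 w\,dA\le K\|f\|^2_{D_\alpha}$ uniformly in $n$, which is exactly the UBSCM condition for the measures $|(\phi^n)'|^2(1-|z|^2)^\alpha\,dA$.

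For the reverse direction, assume $\|\phi\|_\infty\le 1$ and that $\{|(\phi^n)'|^2(1-|z|^2)^\alpha\,dA\}_n$ is a UBSCM. Then from \eqref{eqn101} and $(a+b)^2\le 2a^2+2b^2$,
\[
\|M_\phi^n f\|^2 \;\le\; |f(0)|^2 \;+\; \frac{2}{\pi}\!\int_{\mathbb D}\!|f|^2|(\phi^n)'|^2 w\,dA \;+\; \frac{2}{\pi}\!\int_{\mathbb D}\!|\phi|^{2n}|f'|^2 w\,dA.
\]
The UBSCM hypothesis bounds the first integral by $C\|f\|^2$, while $\|\phi\|_\infty\le 1$ bounds the second by $2\|f\|^2$. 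This yields $\|M_\phi^n\|\le K'$ uniformly, proving (PB).

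Finally, since $\|M_\phi^n\|=\|(M_\phi^*)^n\|$ for all $n$, power boundedness of $M_\phi$ and $M_\phi^*$ are equivalent, so nothing needs to be proved separately for the adjoint. The only step that carries genuine content is extracting $\|\phi\|_\infty\le 1$ from power boundedness; the two integral estimates are symmetric applications of the Leibniz rule and are essentially routine once the equivalent norm is in place. The main conceptual obstacle is keeping the roles of $\|\phi\|_\infty\le 1$ and the Carleson measures separate: the former controls the $\phi^n f'$ term in the Leibniz expansion and the latter the $(\phi^n)'f$ term, and neither hypothesis alone is enough.
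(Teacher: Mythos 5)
Your proof is correct and follows essentially the same route as the paper: both directions reduce to the Leibniz splitting $(\phi^n f)'=(\phi^n)'f+\phi^n f'$ measured in the equivalent integral norm \eqref{eqn101}, with $\|\phi\|_\infty\le 1$ controlling the $\phi^n f'$ term and the UBSCM hypothesis (resp.\ power boundedness) controlling the $(\phi^n)'f$ term, and the adjoint handled via $\|(M_\phi^*)^n\|=\|M_\phi^n\|$. The only, harmless, variation is that you extract $\|\phi\|_\infty\le 1$ from the eigenvector relation $M_\phi^*k_w=\overline{\phi(w)}\,k_w$ together with $r(M_\phi)\le 1$, whereas the paper invokes the spectral identity $\sigma(M_\phi)=\overline{\phi(\mathbb D)}$ from \cite{BS}; both are valid.
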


Notice what happens for the range $\alpha \in [0,1)$, when we assume the stronger condition that $\|\phi\|_\infty<1$: In this particular case, a multiplier of $D_\alpha$ automatically defines a (PB) multiplication operator: the sequence of Carleson measures is uniformly bounded with the same constant as that of the first element of the sequence, which is a Carleson measure because of the multiplier assumption.

\begin{proof}

Since the norm of the adjoint is equal to that of $M_\phi$, it is sufficient to prove the characterization  for $M_{\phi}$.

$(\Rightarrow) $ If the operator $M_{\phi}$ is (PB) then the spectrum $\sigma (M_{\phi})$ is contained in closure of the unit disc. Since  by \cite {BS}, $\sigma (M_{\phi})= \overline{\phi(\mathbb D)}$,  then $\|\phi\|_{\infty} \le 1$.

In order to prove  that $|(\phi^n)'|^2(1-|z|^2)^{\alpha} dA$ is  a UBSCM for $D_{\alpha}$, we take an $f \in D_\alpha$ and we can use that
\[|f \left(\phi^n\right)'|^2 \leq 2\left(|\left(\phi^n f\right)'|^2 + |\phi^n f'|^2\right),\]
to see that
\begin{eqnarray*}
\frac{1}{\pi}\int_{\mathbb D} | f |^2|(\phi^n)' |^2 (1-|z|^2)^{\alpha}dA &\le & \frac{2}{\pi}\int_{\mathbb D} |(\phi^n f)' |^2 (1-|z|^2)^{\alpha}dA \\ & + &  \frac{2}{\pi}\int_{\mathbb D} |\phi^n f' |^2 (1-|z|^2)^{\alpha}dA.
\end{eqnarray*}
The first term is bounded by some constant times $\|f\|_{D_\alpha}^2$ precisely because $M_\phi$ is (PB). The second term, precisely because $|\phi| \leq 1$ at every point. 

$(\Leftarrow)$ Again, take an $f \in D_\alpha$. From the definition of $M_\phi$, and of the equivalent norm in \eqref{eqn101}, we have
\[\|M_{\phi}^n f\|^2_{D_{\alpha}} = \|\phi ^n f\|^2_{D_{\alpha}}  \approx |\phi^n(0) f(0)|^2 + \frac{1}{\pi}\int_{\mathbb D} |(\phi^n f)'|^2(1-|z|^2)^{\alpha} dA.\]
Bear in mind that $|\phi^n(0)| \leq 1$. We use the same identification as above, that is, $(\phi^n f)' = \phi^n f' + \left(\phi^n\right)'f,$ in order to bound the norm of $M_\phi^n f$ with a constant multiple of
\[| f(0)|^2  + 2\frac{1}{\pi}\int_{\mathbb D} | \phi^{n} f' |^2(1-|z|^2)^{\alpha} dA+ 2\frac{1}{\pi}\int_{\mathbb D}|f|^2 |(\phi^n)' |^2 (1-|z|^2)^{\alpha}dA.
\]

Using once more that $\|\phi\|_\infty \leq 1$, we can bound the first two terms by $2 \|f\|^2_{D_\alpha}$.
Using the UBSCM assumption, the remaining (right-most) term is bounded by a constant times the same quantity.
\end{proof}

The theory of multipliers for the Dirichlet spaces for $\alpha \in [0,1)$ is quite rich and it should not be a surprise to the experts that Carleson measures are related with the operator theoretic properties of multipliers. We can exploit this by strengthening the assumption on $\phi$, to avoid having to deal with \emph{sequences} of measures.

\begin{corollary} \label{powerbounded} Suppose  $-1 < \alpha <1$, $\phi \in M_{D_{\alpha}}$, $\|\phi\|_{\infty}\le 1 $ and $(\frac{|\phi'|}{1-|\phi|^2})^2(1-|z|^2)^{\alpha} dA$ is a Carleson measure for $D_{\alpha}$. Then $M_{\phi}$  and $M^*_{\phi}$ are (PB).
\end{corollary}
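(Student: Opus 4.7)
The plan is to reduce this to Theorem \ref{TPB} by showing that the hypothesis that $\left(\frac{|\phi'|}{1-|\phi|^2}\right)^2(1-|z|^2)^{\alpha} dA$ is a Carleson measure, combined with $\|\phi\|_\infty \le 1$, \emph{dominates} (up to a uniform constant) each member of the sequence $|(\phi^n)'|^2(1-|z|^2)^{\alpha} dA$, so that the latter is automatically a UBSCM.

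The key computation is the chain rule identity $(\phi^n)' = n\phi^{n-1}\phi'$, which gives
\[ |(\phi^n)'|^2(1-|z|^2)^{\alpha} = n^2|\phi|^{2(n-1)}|\phi'|^2(1-|z|^2)^{\alpha}. \]
I would then establish the elementary scalar inequality
\[ \sup_{n\ge 1}\, \sup_{t\in[0,1]}\, n^2 t^{n-1}(1-t)^2 =: K < \infty. \]
This is a one-variable calculus exercise: for fixed $n\ge 2$ the maximum of $t^{n-1}(1-t)^2$ on $[0,1]$ is attained at $t_n=(n-1)/(n+1)$, and substituting yields a quantity asymptotic to $4/e^2$ as $n\to\infty$. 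Applying this with $t=|\phi(z)|^2 \in [0,1]$ (using $\|\phi\|_\infty \le 1$) produces the pointwise bound
\[ n^2|\phi(z)|^{2(n-1)} \;\le\; \frac{K}{(1-|\phi(z)|^2)^2}, \]
and hence
\[ |(\phi^n)'(z)|^2(1-|z|^2)^{\alpha} \;\le\; K\left(\frac{|\phi'(z)|}{1-|\phi(z)|^2}\right)^2(1-|z|^2)^{\alpha} \]
at every $z\in\mathbb D$.

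Consequently, for every $f\in D_\alpha$ and every $n\in\NN$, integrating $|f|^2$ against the measure on the left is dominated by $K$ times the integral of $|f|^2$ against the Carleson measure assumed in the hypothesis, which is bounded by a constant times $\|f\|^2_{D_\alpha}$ with a bound independent of $n$. This verifies that $\{|(\phi^n)'|^2(1-|z|^2)^{\alpha} dA\}_{n\in\NN}$ is a UBSCM for $D_\alpha$. Combined with $\|\phi\|_\infty \le 1$, Theorem \ref{TPB} then yields that both $M_\phi$ and $M_\phi^*$ are (PB). I do not foresee a serious obstacle here; the only subtlety is the uniform control of $n^2 t^{n-1}(1-t)^2$, which is the whole point of inserting the factor $(1-|\phi|^2)^2$ in the hypothesis.
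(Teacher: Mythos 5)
Your proposal is correct and follows essentially the same route as the paper: the paper likewise writes $(\phi^n)'=n\phi^{n-1}\phi'$, inserts the factor $(1-|\phi|^2)^2$, and uses the uniform scalar bound on $n^2 t^{n-1}(1-t^2)^2$ (stated there as $x^{n-1}(1-x^2)<2/n$) to dominate everything by the assumed Carleson measure. The only cosmetic difference is that you invoke Theorem \ref{TPB} as a black box by verifying the UBSCM condition, while the paper re-runs the product-rule decomposition of $\|M_\phi^n f\|^2$ inline.
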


\begin{proof}

As in the proof of the previous Theorem, we split $(\phi^n f)' = \phi^n f' + \left(\phi^n\right)'f$ to bound the norm of $M_\phi^n f$ with
\[| f(0)|^2  + \frac{2}{\pi}\int_{\mathbb D} | \phi^{n} f' |^2(1-|z|^2)^{\alpha} dA+ \frac{2}{\pi}\int_{\mathbb D}|f|^2 |(\phi^n)' |^2 (1-|z|^2)^{\alpha}dA.
\]
Under our assumption that $\|\phi\|_\infty \leq 1$, the first two terms on the right-hand side are controlled by $2\|f\|^2_{D_\alpha}$. For the other term, we simply use that $(g^n)'=ng'g^{n-1}$, so that 

\[\int_{\mathbb D} |f|^2 |(\phi^n)' |^2 (1-|z|^2)^{\alpha}dA
\le  \int_{\mathbb D}|f|^2 n^2 |\phi^{2n-2}|(1-|\phi|^2)^2 \frac{|\phi'  |^2}{ (1-|\phi|^2)^2}  (1-|z|^2)^{\alpha}dA.
\]
Now we can use that $x^{n-1}(1-x^2) < \frac{2}{n}$ for $0 \le x \le 1$, applied to $|\phi|^2$, so that the right-hand side is bounded by
\[4\int_{\mathbb D}|f|^2   \frac{|\phi'  |^2}{ (1-|\phi|^2)^2}  (1-|z|^2)^{\alpha}dA
\le  C\|f\|^2_{D_{\alpha}},
\]
where the last inequality comes as a direct consequence of the Carleson measure assumption.
\end{proof}

Recall that a \emph{reproducing kernel Hilbert space} (RKHS) $H$ over $\D$ is a Hilbert space with  the property that for each $\omega \in \mathbb D$, there exists a unique function 
$k_{\omega} \in H$ such that 
$$
f(z)= \langle f, k_z\rangle.
$$

Thus, from the Cauchy-Schwarz inequality, we obtain a pointwise control of values in terms of the norm:
$$
|f(z)|\le \|f\| \|k_z\|.
$$
 
For some choice of equivalent norms, the reproducing kernels in $D_{\alpha}$ (see, \cite{CFS})  are given by

\begin{eqnarray*}
k_z^{\alpha}(w)= \left\{ 
\begin{matrix}
 \frac{1}{(1-\overline z w)^{\alpha}}& \mbox{ if } &0< \alpha \le 1, \\
\frac{1}{\overline zw }\log\frac{1}{1-\overline z w}& \mbox{ if } &\alpha =0. 
\end{matrix}
\right.
\end{eqnarray*}

The reproducing property allows to compute the norms of the kernels. Indeed, 
\begin{eqnarray*}
\|k_z^{\alpha}\|^2= \left\{
\begin{matrix}
  \frac{1}{(1-|z|^2)^{\alpha}}& \mbox{ if } &0< \alpha \le 1, \\
\frac{1}{|z|^2 }\log\frac{1}{1- |z|^2}& \mbox{ if } &\alpha =0,
\end{matrix}
 \right.
\end{eqnarray*}

and thus the pointwise estimates that we can deduce are
\begin{eqnarray}\label{kernel}
|f(z)|^2 \le \left\{
\begin{matrix}
  C\frac{1}{(1-|z|^2)^{\alpha}}\| f\|^2_{D_{\alpha}}& \mbox{ if }& 0< \alpha \le 1, \\
 C \log\frac{1}{1- |z|^2}\| f\|^2_{D_{\alpha}}& \mbox{ if }& \alpha =0. 
\end{matrix}
\right.
\end{eqnarray}
We can use these kernel properties to give a clean condition for power boundedness.

\begin{corollary}\label{logarithmic} Let $\phi \in M_{D_{\alpha}}$ with $0\le \alpha <1$ and $\|\phi\|_\infty \le 1$. 
\begin{enumerate}
    \item[(a)]  If $\alpha >0$ and 
$\int_{\mathbb D}(\frac{|\phi'|}{1-|\phi|^2})^2 dA < \infty$,  then $M_{\phi}$  and $M^*_{\phi}$ are (PB). 

 \item[(b)] If $\alpha=0$ and  $\int_{\mathbb D} \log\frac{1}{1- |z|^2} |\frac{\phi'(z)}{1- |\phi(z)|^2}|^2 dA< \infty $,  then $M_{\phi}$  and $M^*_{\phi}$ are (PB).

\end{enumerate}

\end{corollary}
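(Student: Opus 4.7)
The plan is to reduce both statements to Corollary \ref{powerbounded}, whose hypothesis requires that the measure
\[
d\mu_\phi := \Bigl(\frac{|\phi'|}{1-|\phi|^2}\Bigr)^2 (1-|z|^2)^{\alpha}\,dA
\]
be a Carleson measure for $D_\alpha$. Since $\|\phi\|_\infty \le 1$ and $\phi \in M_{D_\alpha}$ are already assumed, the only work left is to verify this Carleson condition using the integrability hypotheses.

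To do so, I would test $d\mu_\phi$ against an arbitrary $g \in D_\alpha$ and dominate $|g(z)|^2$ pointwise by the reproducing kernel estimates \eqref{kernel}. In part (a), for $\alpha \in (0,1)$, the bound $|g(z)|^2 \le C (1-|z|^2)^{-\alpha}\|g\|_{D_\alpha}^2$ cancels exactly the weight $(1-|z|^2)^{\alpha}$ appearing in $d\mu_\phi$, leaving
\[
\int_{\mathbb D} |g|^2\, d\mu_\phi \le C\|g\|_{D_\alpha}^2 \int_{\mathbb D} \Bigl(\frac{|\phi'|}{1-|\phi|^2}\Bigr)^2 dA,
\]
which is finite by hypothesis. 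In part (b), with $\alpha=0$ the weight $(1-|z|^2)^{\alpha}$ is trivial, and the kernel estimate contributes instead a logarithmic factor $\log\frac{1}{1-|z|^2}$, producing exactly the integral assumed to be finite. In both cases the Carleson constant of $d\mu_\phi$ is controlled by the integral appearing in the hypothesis, and Corollary \ref{powerbounded} then yields that $M_\phi$ and $M_\phi^*$ are (PB).

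There is not really a serious obstacle here: the hypotheses are precisely calibrated so that the pointwise kernel bound converts the abstract Carleson condition into the stated finite integral. The only point requiring a moment of care is that the kernel estimates in \eqref{kernel} are stated only up to constants and only for the equivalent norm, but since the property (PB) is preserved under switching to equivalent norms this is harmless.
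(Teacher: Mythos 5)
Your proposal is correct and follows exactly the paper's own argument: use the pointwise kernel estimates \eqref{kernel} to show that $(\frac{|\phi'|}{1-|\phi|^2})^2(1-|z|^2)^{\alpha}\,dA$ is a Carleson measure for $D_\alpha$, with the weight cancelling against the kernel norm for $\alpha>0$ and the logarithmic factor appearing for $\alpha=0$, and then invoke Corollary \ref{powerbounded}. No differences worth noting.
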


\begin{proof}
Using \eqref{kernel} we obtain
\begin{eqnarray*}
 \frac{1}{\pi}\int_{\mathbb D}|f|^2   \frac{|\phi'  |^2}{ (1-|\phi|^2)^2}  (1-|z|^2)^{\alpha}dA
\le  C\|f\|^2_{D_{\alpha}}
\end{eqnarray*}

Thus $(\frac{|\phi'|}{1-|\phi|^2})^2(1-|z|^2)^{\alpha} dA$ is  a  Carleson measure for $D_{\alpha}$. Our new result follows now from Corollary \ref{powerbounded}.
\end{proof}

The integral condition in part (a) appears in previous contributions to the theory of analytic function spaces in relation with hyperbolic area. In \cite[page 5311]{BS}, the authors give an example of a univalent function $\phi$ for which $\phi(\mathbb D)$ is dense in $\mathbb D$ and such that  $\int_{\mathbb D}(\frac{|\phi'|}{1-|\phi|^2})^2 dA$ is bounded. This justifies studying the role of univalence.

\begin{corollary} Let $\phi \in M_{D_\alpha}$ for $0<\alpha <1$. If $\phi :\mathbb D \rightarrow \mathbb D$ is univalent  and  $\phi (\mathbb D) $  has finite hyperbolic area, then $M_{\phi}$  and $M^*_{\phi}$ are (PB).
\end{corollary}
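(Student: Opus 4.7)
The plan is to reduce this corollary to Corollary \ref{logarithmic}(a) by identifying the finite hyperbolic area of $\phi(\mathbb D)$ with the integral condition appearing there. Since $\phi$ maps $\mathbb D$ into $\mathbb D$, we automatically have $\|\phi\|_\infty \leq 1$, so only the integral condition needs verification.

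First I would recall that the hyperbolic area of a subset $\Omega \subseteq \mathbb D$ is
\[
\int_{\Omega} \frac{dA(w)}{(1-|w|^2)^2}.
\]
Next, I would use the change of variables $w = \phi(z)$. Since $\phi$ is univalent, this is a bijection from $\mathbb D$ onto $\phi(\mathbb D)$, with Jacobian $|\phi'(z)|^2$. Therefore
\[
\int_{\phi(\mathbb D)} \frac{dA(w)}{(1-|w|^2)^2} = \int_{\mathbb D} \frac{|\phi'(z)|^2}{(1-|\phi(z)|^2)^2}\, dA(z).
\]
By hypothesis, the left-hand side is finite, hence so is the right-hand side.

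This is exactly the hypothesis of Corollary \ref{logarithmic}(a) (for $0 < \alpha < 1$), so we conclude that $M_\phi$ and $M_\phi^*$ are (PB). Since each step is essentially a direct citation or a one-line change-of-variables calculation, there is no genuine obstacle here; the only mild subtlety worth noting is that univalence is used precisely to guarantee that the change of variables produces an equality rather than merely a one-sided inequality (counting multiplicities). The result would still hold with an inequality under mere surjectivity hypotheses, but univalence gives the cleanest identification between the geometric quantity (hyperbolic area of the image) and the analytic one (the integral appearing in Corollary \ref{logarithmic}(a)).
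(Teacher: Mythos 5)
Your proposal is correct and is essentially identical to the paper's own proof: both reduce to Corollary \ref{logarithmic}(a) via the change of variables $w=\phi(z)$, using univalence to identify $\int_{\mathbb D}\bigl(\frac{|\phi'|}{1-|\phi|^2}\bigr)^2 dA$ with the hyperbolic area of $\phi(\mathbb D)$. Your added remarks on why univalence gives an equality (and that an inequality would suffice) are accurate but not needed.
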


\begin{proof}
 If $G:=\phi (\mathbb D) $  has finite hyperbolic area, then 
$\int_{G}\frac{1}{(1-|z|^2)^2} dA <\infty $.

Thus \[\int_{\mathbb D}\left(\frac{|\phi'|}{1-|\phi|^2}\right)^2 dA=\int_{G}\frac{1}{(1-|z|^2)^2} dA< \infty\] and applying the previous Corollary we get our result.\end{proof}

For $\beta>0$ and $G$ an open subset of $\mathbb D$, we say that $G$ contacts $\mathbb T$ with mean order (at most) $\beta$ provided that 
$$
\int_{0}^{2\pi} 1_G(re^{i\theta})d\theta = O((1-r)^{\frac{1}{\beta}})
$$
as $r\rightarrow 1^-$, where $1_G$ is the characteristic function of the set $G$.

For example, the mean order of contact for a domain contained in a Stoltz region is at most 1. A cusp type of contact corresponds to
orders less than one. Suppose that a domain $G \subset \mathbb D$ has boundary contacting the
circle only at the point 1 and furthermore, that near 1, $G$ lies between the graph of $y = (1 -x)^2$ and its reflection
in the x-axis. Then $G$ makes contact with $\mathbb T$ with mean order at most $1/2$ (see \cite {BCM}).

Based upon the ideas  of the proof of \cite[Corollary 5.2]{BCM}, we obtain a relation between order of contact and power boundedness: 
 
\begin{corollary}
Let $0<\alpha <1$. Assume $\phi \in M_{D_\alpha}$ is univalent and $\phi(\mathbb D)$ makes contact with the unit circle with mean order $\beta <1 $. Then  $M_{\phi}$  and $M^*_{\phi}$ are (PB). 
\end{corollary}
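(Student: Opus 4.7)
The plan is to reduce to the previous Corollary (univalent $\phi$ with $\phi(\mathbb D)$ of finite hyperbolic area implies (PB)): I would show that the mean-order-of-contact hypothesis with $\beta<1$ forces $G := \phi(\mathbb D)$ to have finite hyperbolic area, and then invoke that Corollary directly.

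Since $\phi$ is univalent, the change-of-variables formula converts the hyperbolic-area-type integral to one over $G$:
\[
\int_{\mathbb D}\left(\frac{|\phi'(z)|}{1-|\phi(z)|^2}\right)^2 dA(z) = \int_G \frac{dA(w)}{(1-|w|^2)^2}.
\]
I would then switch to polar coordinates and apply Fubini to rewrite the right-hand side as
\[
\int_0^1 \frac{r}{(1-r^2)^2}\left(\int_0^{2\pi} 1_G(re^{i\theta})\, d\theta\right) dr.
\]
By the mean-order-of-contact hypothesis, the inner integral is $O((1-r)^{1/\beta})$ as $r\to 1^-$. Combined with the comparison $(1-r^2)^2 \sim 4(1-r)^2$ near $r=1$, the full integrand is controlled near the boundary by a constant multiple of $(1-r)^{1/\beta - 2}$. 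This is integrable at $r=1$ precisely because $\beta < 1$ gives $1/\beta - 2 > -1$, and on any $[0,\rho]$ with $\rho<1$ the integrand is bounded. Hence $G$ has finite hyperbolic area, and the previous Corollary yields that both $M_\phi$ and $M^*_\phi$ are (PB).

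The argument amounts to a one-dimensional integrability check of a boundary weight, so no significant obstacle arises. The only noteworthy feature is that $\beta<1$ is exactly the sharpness threshold at which $(1-r)^{1/\beta - 2}$ remains integrable at $r=1$, which explains why this is the natural hypothesis for the method and why one cannot push it to $\beta = 1$ without additional information on the implicit constant in the mean-order bound.
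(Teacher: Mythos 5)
Your proposal is correct and follows essentially the same route as the paper: both use univalence to change variables to $\int_G (1-|w|^2)^{-2}\,dA(w)$, pass to polar coordinates, insert the mean-order bound $O((1-r)^{1/\beta})$ on the angular integral, and conclude finiteness from $1/\beta - 2 > -1$, then invoke the preceding (PB) criterion. Your explicit remark that $\beta<1$ is exactly the integrability threshold is a nice observation, but the argument is the same.
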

\begin{proof}
Let $G =\phi (\mathbb D)$. The fact that $G$ makes contact with the unit circle with mean order $\beta <1$ implies that there is a constant $C$ such that for $r \in (0,1)$ we have
$$
\int_{0}^{2\pi} 1_G(re^{i\theta})d\theta \le  C((1-r)^{\frac{1}{\beta}}).
$$
 Changing variables and using polar coordinates yields
$$
\int_{\mathbb D}\left(\frac{|\phi'|}{1-|\phi|^2}\right)^2 dA = \int_{G}\left(\frac{1}{1-|z|^2}\right)^2 dA = \int_0^1 \int_{0}^{2\pi} \frac{1_G(re^{i\theta})}{(1-r^2)^2}rd\theta dr.
$$
In the last expression we make use of our assumption to bound it from above by \[C \int_0^1 \frac{1}{(1-r^2)^{2-\frac{1}{\beta}}}d\theta dr,\] 
which is finite since $\beta <1$.
\end{proof}

Let $\mu$ be a probability measure supported on a compact subset $K$ of $\mathbb T$. The potential $U_{\mu}$ of $\mu$ is defined, for every $z \in \mathbb C$, by
$$
U_{\mu} (z)  =\int_K \log \frac{e}{|z - w|}d\mu(w)
$$
and the energy $I_{\mu}$ of $\mu$ is defined by
$$
I_{\mu} =\int _KU_{\mu}(z)d\mu(z). 
$$

The logarithmic capacity of a Borel set $E \subset \mathbb T$ is then given by
$$
Cap(E) = \sup_{\mu} e^{-I_{\mu}},
$$
where the supremum is over all Borel probability measures $\mu$ with compact support contained
in $E$. Hence $E$ is of logarithmic capacity $0$  if and only
if $I_{\mu} = \infty$ for all probability measures compactly carried by $E$. If $Cap(E) = 0$, then $E$ has null
Lebesgue measure. Moreover a compact set
$K$ such that $Cap(K) = 0$ is totally disconnected. The connection between logarithmic capacity and the Dirichlet space is well known and attracts a lot of attention in the book \cite{EFKMR}.

Suppose $\phi$ is a holomorphic self-map of $\mathbb D$ and $C_{\phi}$ defines a Hilbert–Schmidt operator on the Dirichlet space. 
In that case, by \cite[Theorem 2.1]{Gallardo}, the set $\{e^{it}: |\phi (e^{it})| = 1\}$  has zero logarithmic capacity. On the other hand, by \cite[Lemma 2.2]{Piazza}, $C_{\phi}$ defines a Hilbert–Schmidt operator on the subspace of Dirichlet functions that vanish at zero if and only if $\int_{\mathbb D}(\frac{|\phi'|}{1-|\phi|^2})^2 dA$ is bounded. Thus if $\|\phi \|_{\infty} \le 1$, $\int_{\mathbb D}(\frac{|\phi'|}{1-|\phi|^2})^2 dA$ is bounded and $\phi(0) =0$, then  $\{e^{it}: |\phi (e^{it})| = 1\}$  has zero logarithmic capacity. We can actually say something stronger.
\begin{corollary} Let $0<\alpha <1$. For every compact set $K \subset \mathbb T$ of logarithmic capacity $0$, there exists $\phi$ in the Dirichlet space,  continuous on $\overline{ \mathbb D}$ with $\phi(0)=0$ such that  $K = \{e^{it}: |\phi (e^{it})| = 1\}$ and $M_{\phi}$  and $M^*_{\phi}$ are power bounded in $D_{\alpha}$.
\end{corollary}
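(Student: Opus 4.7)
The plan is to produce $\phi$ by an outer-function construction tailored to $K$ and then to invoke Corollary~\ref{logarithmic}(a).

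Given $K\subset\mathbb{T}$ compact with $\mathrm{Cap}(K)=0$, the first step is to construct an outer function $\phi_0$ in the Dirichlet space, continuous on $\overline{\mathbb D}$, with $|\phi_0|\le 1$ on $\overline{\mathbb D}$, $\{e^{it}\in\mathbb{T}:|\phi_0(e^{it})|=1\}=K$, and satisfying
\[
\int_{\mathbb D}\Bigl(\frac{|\phi_0'(z)|}{1-|\phi_0(z)|^2}\Bigr)^2 dA(z)<\infty.
\]
The existence of such a $\phi_0$ is precisely the converse direction of the Gallardo--González and Piazza characterizations recalled just before the statement. A concrete recipe is to choose a continuous function $u:\mathbb{T}\to[0,+\infty)$ vanishing exactly on $K$, whose Dirichlet-type regularity is finely tuned, and to let $\phi_0$ be the outer function with boundary modulus $e^{-u}$. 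The zero-capacity hypothesis on $K$ is what allows such a $u$ to exist without violating the integrability requirement above.

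Once $\phi_0$ is in hand, I set $\phi(z):=z\phi_0(z)$, which yields $\phi(0)=0$ while preserving continuity on $\overline{\mathbb D}$, the contact set $K$, the bound $\|\phi\|_\infty\le 1$, and the integrability of $(|\phi'|/(1-|\phi|^2))^2$ with respect to area measure (since factoring a $z$ changes the relevant quantities only by bounded multiplicative perturbations away from $0$, and near $\mathbb{T}$ the boundary modulus is unchanged). The regularity of the construction also yields that $|\phi'|^2(1-|z|^2)^\alpha\,dA$ is a Carleson measure for $D_\alpha$, so by Stegenga's criterion \cite{S} one has $\phi\in M_{D_\alpha}$. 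Then Corollary~\ref{logarithmic}(a) applies directly and gives that $M_\phi$ and $M_\phi^*$ are power bounded in $D_\alpha$.

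The main obstacle is the first step: constructing $u$ so that the outer function $\phi_0$ meets simultaneously the regularity requirements (continuity on $\overline{\mathbb D}$ and multiplier of $D_\alpha$) and the integrability demanded by Corollary~\ref{logarithmic}(a). It is in controlling the integrand near $K$, where both $|\phi_0'|$ and $1-|\phi_0|^2$ degenerate, that the hypothesis $\mathrm{Cap}(K)=0$ intervenes essentially, through potential-theoretic estimates in the spirit of \cite{Gallardo,Piazza,EFKMR}.
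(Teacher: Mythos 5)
Your second half is exactly the paper's: once a function $\phi$ with $\|\phi\|_\infty\le 1$, $\phi(0)=0$, contact set $K$ and $\int_{\mathbb D}\bigl(|\phi'|/(1-|\phi|^2)\bigr)^2\,dA<\infty$ is in hand, Corollary \ref{logarithmic}(a) gives power boundedness in $D_\alpha$ for $\alpha>0$. The problem is the first half. You do not actually construct $\phi_0$; you describe a ``recipe'' with a boundary modulus $e^{-u}$ whose regularity is to be ``finely tuned,'' and you yourself flag this construction as ``the main obstacle.'' That obstacle is the entire content of the statement: producing, for an arbitrary compact $K$ of zero logarithmic capacity, a function in the Dirichlet space, continuous on $\overline{\mathbb D}$, with contact set exactly $K$ and with the hyperbolic-derivative integral finite, is a nontrivial potential-theoretic construction. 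It is not ``the converse direction'' of the results recalled before the statement (those only give the forward implication, from Hilbert--Schmidt to zero capacity); it is precisely Theorem 4.1 of \cite{Piazza}, which the paper invokes and which already delivers $\phi(0)=0$ and continuity on $\overline{\mathbb D}$, together with $C_\phi$ Hilbert--Schmidt, hence (by \cite[Lemma 2.2]{Piazza}) the finiteness of $\int_{\mathbb D}\bigl(|\phi'|/(1-|\phi|^2)\bigr)^2\,dA$. Without citing that theorem or carrying out the construction, your argument has a genuine gap.

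A secondary issue: the step $\phi:=z\phi_0$ (which the paper does not need, since the cited theorem already gives $\phi(0)=0$) is not the ``bounded multiplicative perturbation'' you claim. Writing $\phi'=\phi_0+z\phi_0'$, the new term contributes
\[
\int_{\mathbb D}\frac{|\phi_0|^2}{\bigl(1-|z|^2|\phi_0|^2\bigr)^2}\,dA,
\]
which carries no factor of $|\phi_0'|^2$ and is therefore not controlled by $\int_{\mathbb D}\bigl(|\phi_0'|/(1-|\phi_0|^2)\bigr)^2\,dA$; near the contact set both $1-|z|^2$ and $1-|\phi_0|^2$ degenerate and the integrand can be as large as $(1-|z|)^{-2}$ on a full neighbourhood of $K$ in $\mathbb D$. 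So even granting the existence of $\phi_0$, this normalization step would need a separate argument.
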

Observe that this time the space where we require $\phi$ belongs is not the same as where we show $M_\phi$ is (PB). 

\begin{proof}
 By \cite[Theorem 4.1]{Piazza}, for every compact set $K \subset \mathbb T$ of logarithmic capacity $ 0$, there exists $\phi$ in the Dirichlet space,  continuous on $\overline{ \mathbb D}$ with $\phi(0)=0$ such that  $K = \{e^{it}: \phi (e^{it}) = 1\}$ and the composition operator $C_{\phi}$ is Hilbert-Schmidt when acting on $D_0$  and thus 
 $\int_{\mathbb D}(\frac{|\phi'|}{1-|\phi|^2})^2 dA$ is bounded. We can then apply Corollary \ref{logarithmic} (a) to obtain that $M_{\phi}$  and $M^*_{\phi}$ are power bounded in   $D_{\alpha}$ for $\alpha >0$. \end{proof}

Probably a similar proof can yield a connection with a corresponding Riesz capacity (instead of the logarithmic one) in each weighted Dirichlet space, but we did not pursue this any further since we are not aware of any Riesz capacity version of Theorem 4.1 in \cite{Piazza}.

\subsection{Ces\`{a}ro boundedness}
At this point, we are going to follow a similar path than in previous Subsection and characterize bounded multiplication operators acting on weighted Dirichlet spaces that are  Ces\`{a}ro bounded. We remind that if we assume $\phi$ is a multiplier in some space $D_\alpha$, the conclusions regarding ergodic properties refer to that space $D_\alpha$. Recall as well that $M_n$ denotes Ces\`{a}ro means while $M_\phi$ denotes a multiplier (the difference being based on whether the subscript is a number or a function).

\begin{theorem}\label{TCB} Let $-1 < \alpha <1$ and $\phi \in M_{D_\alpha}$. $M_{\phi}$ and $M^*_{\phi}$ are (CB) if and only if $\phi \equiv 1$ or $\|\phi\|_{\infty}\le 1 $ and
$\{|(\frac{1-\phi^{n+1}}{(n+1)(1- \phi)})'|^2(1-|z|^2)^{\alpha} dA\}_{n \in \NN}$ is a UBSCM for $D_{\alpha}$.
\end{theorem}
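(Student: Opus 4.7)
The plan is to follow the strategy of Theorem \ref{TPB}, replacing the power $\phi^n$ throughout by the symbol of the Cesàro average. The starting point is the identity $M_n(M_\phi) = M_{\psi_n}$, where
\[
\psi_n := \frac{1}{n+1}\sum_{k=0}^{n} \phi^k = \frac{1-\phi^{n+1}}{(n+1)(1-\phi)},
\]
the second expression making sense as a holomorphic function whenever $\phi \not\equiv 1$ (removable singularities at zeros of $1-\phi$). The trivial case $\phi\equiv 1$ gives $M_\phi=I$, which is plainly (CB), and this accounts for the exceptional case in the statement. Since $\|M_\phi^{*n}\|=\|M_\phi^n\|$, the (CB) property of $M_\phi^*$ is equivalent to that of $M_\phi$, so I can concentrate on $M_\phi$ alone.

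For the forward direction, I would first extract $\|\phi\|_\infty\le 1$. If $M_\phi$ is (CB), then identity \eqref{media} forces $\|M_\phi^n\|=O(n)$, so the spectral radius of $M_\phi$ is at most $1$; combined with $\sigma(M_\phi)=\overline{\phi(\D)}$ from \cite{BS}, this yields $\|\phi\|_\infty\le 1$. In particular, by the triangle inequality, $|\psi_n|\le 1$ on $\D$. To deduce the UBSCM condition, I would then use the equivalent norm \eqref{eqn101}, write $(\psi_n f)'=\psi_n f'+\psi_n' f$, and apply $|\psi_n' f|^2\le 2\bigl(|(\psi_n f)'|^2+|\psi_n f'|^2\bigr)$. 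Integrating against $(1-|z|^2)^\alpha\,dA$, the first resulting term is bounded by a constant times $\|M_{\psi_n}f\|^2\le C\|f\|^2$ uniformly in $n$, and the second by $\|f\|^2$ since $|\psi_n|\le 1$. This would give the UBSCM with a uniform constant.

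For the converse, I would use the same decomposition in the opposite direction, starting from
\[
\|M_{\psi_n} f\|^2 \approx |\psi_n(0)f(0)|^2 + \frac{1}{\pi}\int_{\D} |(\psi_n f)'|^2 (1-|z|^2)^\alpha\,dA,
\]
and split $(\psi_n f)'=\psi_n f'+\psi_n' f$. The bound $|\psi_n|\le 1$ controls both the boundary value and the $|\psi_n f'|^2$ integral by constant multiples of $\|f\|^2$, while the UBSCM hypothesis bounds $\int |\psi_n' f|^2(1-|z|^2)^\alpha\,dA$ by $C\|f\|^2$ uniformly in $n$. Putting the three pieces together, $\sup_n \|M_{\psi_n}\|<\infty$, i.e. $M_\phi$ is (CB).

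The only genuinely delicate point I anticipate is the spectral-radius step extracting $\|\phi\|_\infty\le 1$ from (CB) via \eqref{media}; after that, everything is a direct transcription of the Theorem \ref{TPB} argument with $\phi^n$ replaced by $\psi_n$. The bookkeeping around points where $\phi=1$ needs only the analytic-continuation interpretation of $\psi_n$ as a polynomial in $\phi$ of degree $n$, avoiding any separate case analysis.
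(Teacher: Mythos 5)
Your proposal is correct and follows essentially the same route as the paper: both identify $M_n(M_\phi)=M_{\psi_n}$ with $\psi_n=\frac{1-\phi^{n+1}}{(n+1)(1-\phi)}$, extract $\|\phi\|_\infty\le 1$ from the spectral inclusion $\sigma(M_\phi)=\overline{\phi(\D)}\subset\overline{\D}$, and run the product-rule decomposition $(\psi_n f)'=\psi_n f'+\psi_n' f$ in both directions, using $|\psi_n|\le 1$ for one term and the (CB), respectively UBSCM, hypothesis for the other. Your explicit justification of the spectral-radius step via $\|M_\phi^n\|=O(n)$ is a small elaboration the paper leaves implicit, not a different argument.
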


\begin{proof}
We can assume that $\phi $ is not the constant 1. 

$(\Rightarrow)$ If the operator $M_{\phi}$ is (CB) then the spectrum $\sigma (M_{\phi})$ is contained in $\overline{\mathbb D}$. Since $\sigma (M_{\phi})= \overline{\phi(\mathbb D)}$ then $\|\phi\|_{\infty} \le 1$.

In order to prove  that $\{|(\frac{1-(\phi)^{n+1}}{(n+1)(1- \phi)})'|^2(1-|z|^2)^{\alpha} dA\}$ is a UBSCM for $D_{\alpha}$, we test it against a function $f \in D_\alpha$. The action of $M_n$ over $\phi$ is telescopic yielding
\[\frac{1}{\pi}\int_{\mathbb D} |f |^2 |(\frac{1-\phi^{n+1}}{(n+1)(1- \phi)})'|^2 (1-|z|^2)^{\alpha}dA =
\frac{1}{\pi}\int_{\mathbb D} |f |^2|(M_n( M_{\phi}1))' |^2 (1-|z|^2)^{\alpha}dA, 
\]
and now we can use the usual rule for the derivative of the product to bound the right-hand side above by
\[\frac{2}{\pi}\left(\int_{\mathbb D} |(M_n( M_{\phi} f))' |^2 (1-|z|^2)^{\alpha}dA 
+ \int_{\mathbb D} |(M_n( M_{\phi}1)) f' |^2 (1-|z|^2)^{\alpha}dA\right), \]
one of which is bounded by a constant times $\|f\|^2_{D_\alpha}$ precisely because $M_\phi$ is (CB), while the other, because $\|\phi\| \leq 1$. 

$(\Leftarrow)$ We test now the Ces\`{a}ro boundedness on an $f \in D_\alpha$. It is clear by now how to use the derivative of product rule to bound $\|M_n( M_{\phi})f\|^2_{D_{\alpha}}$ from above by

\[|f(0)|^2 + \frac{2}{\pi}\int_{\mathbb D} | (M_n( M_{\phi}1)) f' |^2(1-|z|^2)^{\alpha} dA
+ \frac{2}{\pi}\int_{\mathbb D} |(M_n( M_{\phi}1))' f |^2 (1-|z|^2)^{\alpha}dA.\]

The first two terms are bounded by a constant multiple of $\|f\|^2_{D_\alpha}$ because of our assumption that $\|\phi\|_{\infty} \leq 1$, while the third one is bounded by the Carleson constant of the sequence of measures times $\|f\|^2_{D_\alpha}$.
\end{proof}

We can exploit Carleson measures, and other tools in relation with Theorem \ref{TCB} in the same way we did with Theorem \ref{TPB}, obtaining a more usable testing condition in terms of one measure only.

\begin{corollary}\label{Cesaro1} Let $\phi \in M_{D_\alpha}$ with  $-1 < \alpha <1$, $\|\phi\|_{\infty}\le 1 $ and $ |\frac{\phi'(z)}{1-\phi(z)}|^2(1-|z|^2)^{\alpha} dA $ be a  Carleson measure for $D_{\alpha}$. Then $M_{\phi}$ and $M^*_{\phi}$ are (CB).
\end{corollary}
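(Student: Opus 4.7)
The plan is to apply Theorem \ref{TCB}. Since the hypothesis $\|\phi\|_\infty \leq 1$ is already assumed, and the case $\phi \equiv 1$ is trivially (CB) (as then $M_\phi = I$), it suffices to check, for $\phi \not\equiv 1$, that the sequence $\{|h_n'|^2(1-|z|^2)^\alpha dA\}_{n\in \NN}$, with $h_n := \frac{1-\phi^{n+1}}{(n+1)(1-\phi)}$, is a UBSCM for $D_\alpha$.

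First I would use the telescopic identity $h_n = \frac{1}{n+1}\sum_{k=0}^n \phi^k$ to compute $h_n' = \frac{\phi'}{n+1}\sum_{k=1}^n k\phi^{k-1}$, and then sum the arithmetic-geometric series explicitly in order to factor out $\phi'/(1-\phi)$, obtaining
\[
h_n' \; = \; \frac{\phi'}{1-\phi}\left(\frac{1}{n+1}\sum_{k=0}^{n-1}\phi^k \; - \; \frac{n\phi^n}{n+1}\right).
\]
Under the assumption $\|\phi\|_\infty\leq 1$, each of the two terms inside the parenthesis has modulus at most $\frac{n}{n+1}\leq 1$ pointwise on $\mathbb D$, so we obtain the key pointwise bound $|h_n'| \leq 2\,|\phi'/(1-\phi)|$, uniformly in $n$ and $z$.

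Integrating $|f|^2|h_n'|^2$ against $(1-|z|^2)^\alpha dA$ and applying the Carleson hypothesis on $|\phi'/(1-\phi)|^2(1-|z|^2)^\alpha dA$ gives
\[
\int_{\mathbb D}|f|^2|h_n'|^2(1-|z|^2)^\alpha dA \; \leq \; 4 \int_{\mathbb D}|f|^2\left|\frac{\phi'}{1-\phi}\right|^2(1-|z|^2)^\alpha dA \; \leq \; 4C\|f\|_{D_\alpha}^2,
\]
uniformly in $n$, which is precisely the UBSCM condition. Theorem \ref{TCB} then yields (CB) for both $M_\phi$ and $M^*_\phi$.

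The main technical step is the algebraic factorization that extracts $\phi'/(1-\phi)$ from $h_n'$ with a uniformly bounded coefficient; once this is in hand, the remainder is a direct application of the dominated estimate and parallels in spirit the passage in Corollary \ref{powerbounded} from a sequence of Carleson measures to a single one.
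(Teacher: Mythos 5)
Your proof is correct and takes essentially the same route as the paper: both reduce the statement to the pointwise bound $\bigl|\bigl(\frac{1-\phi^{n+1}}{(n+1)(1-\phi)}\bigr)'\bigr|\le 2\,\bigl|\frac{\phi'}{1-\phi}\bigr|$ and then invoke the Carleson hypothesis, the only cosmetic difference being that you feed the resulting UBSCM into Theorem \ref{TCB} while the paper repeats the direct norm estimate. In fact your explicit factorization of $h_n'$ supplies the verification of the key pointwise bound that the paper merely asserts.
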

\begin{proof}
Let us test the (CB) property on an $f \in D_\alpha$. Applying the derivative of product rule as usual and using that $|\phi|$ is bounded by $1$, we can bound $\|M_n( M_{\phi})f\|^2_{D_{\alpha}}$ from above by 
\[
| f(0)|^2 + 2\frac{1}{\pi}\int_{\mathbb D} |  f' |^2(1-|z|^2)^{\alpha} dA
 +  \frac{2}{\pi}\int_{\mathbb D} |f|^2\left|\left(\frac{1-\phi^{n+1}}{(n+1)(1- \phi)}\right)'\right|^2(1-|z|^2)^{\alpha} dA.\]
The first two terms are dominated by $2 \|f\|^2_{D_\alpha}$. 
Now notice that the modulus of the derivative in the third term is controlled by $|\phi'/(1-\phi)|$. Therefore, the Carleson condition provides then the needed estimate. 
\end{proof}

As a consequence of Corollary \ref{Cesaro1}, we can show in particular that $M_z$ is Ces\`{a}ro bounded in     $D_{\alpha}$ for $\alpha>0$. Indeed, a well known inequality (see \cite[page 79]{AP}, for instance) assures that
 \begin{equation} \label{inequality}
 \int_{\mathbb D}\frac{|f(z)|^2}{|1-z|^2}(1-|z|^2)^{\alpha} dA \le C\|f\|^2_{D_{\alpha}}.
 \end{equation}
Thus $\frac{(1-|z|^2)^{\alpha}}{|1-z|^2} dA $ is a  Carleson measure for $D_{\alpha}$.

Now we can make use of the reproducing kernel pointwise estimates we mentioned before.

\begin{corollary} Suppose $\phi \in M_{D_\alpha}$ with  $-1< \alpha <1$ and $\|\phi\|_\infty\leq 1$. Suppose any of the following hold: 
\begin{enumerate}
\item[(a)] $\alpha >0$ and  $\int_{\mathbb D}|\frac{\phi'}{1-\phi}|^2 dA< \infty$.

\item[(b)] $\alpha =0$ and $\int_{\mathbb D}\log\frac{1}{1- |z|^2} |\frac{\phi'(z)}{1- \phi(z)}|^2 dA <\infty$.

\item[(c)] $ -1<\alpha <0$, and $\log(1-\phi)\in D_{\alpha}$.
\end{enumerate}
Then $M_{\phi}$ and $M^*_{\phi}$ are (CB) (in the corresponding $D_\alpha$).
\end{corollary}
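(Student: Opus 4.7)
The plan is to reduce all three cases to Corollary \ref{Cesaro1}, i.e.\ to show that the measure $d\mu:=|\phi'/(1-\phi)|^2(1-|z|^2)^\alpha dA$ is a Carleson measure for $D_\alpha$. Under the standing assumption $\|\phi\|_\infty\le 1$, the three hypotheses are tailored precisely so that a pointwise estimate of $|f|^2$ against the appropriate weight reduces the Carleson testing integral to the integral in the hypothesis.

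For case (a), with $0<\alpha<1$, I would apply the RKHS pointwise bound from \eqref{kernel},
\[|f(z)|^2 \le C(1-|z|^2)^{-\alpha}\|f\|^2_{D_\alpha},\]
so that the weight $(1-|z|^2)^\alpha$ in $d\mu$ is absorbed and the Carleson test becomes
\[\int_{\mathbb D}|f|^2\,d\mu \le C\|f\|^2_{D_\alpha}\int_{\mathbb D}\left|\frac{\phi'}{1-\phi}\right|^2 dA,\]
which is finite by hypothesis. Case (b), with $\alpha=0$, is identical save for replacing the kernel estimate by the logarithmic one $|f(z)|^2 \le C\log\frac{1}{1-|z|^2}\|f\|^2_{D_0}$. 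In both cases Corollary \ref{Cesaro1} then delivers (CB) for $M_\phi$ and $M_\phi^*$.

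The main obstacle is case (c), since for $-1<\alpha<0$ we no longer have a kernel estimate of the form $|f(z)|^2 \le C w(z)\|f\|^2_{D_\alpha}$ where the weight $w$ tames $(1-|z|^2)^\alpha$ to produce the integral of the hypothesis. Instead I would use that for $-1<\alpha<0$ the space $D_\alpha$ is a Banach algebra of multipliers and, by Cauchy--Schwarz on Taylor coefficients (since $\sum(n+1)^{\alpha-1}$ converges), continuously embeds in $H^\infty$. Set $h:=\log(1-\phi)$, so that $h'=-\phi'/(1-\phi)$ and $|h'|^2$ is the density of $d\mu$ against $(1-|z|^2)^\alpha dA$. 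Hypothesis (c) means exactly that $h\in D_\alpha$, so by the algebra property $h$ is a multiplier of $D_\alpha$ and $h\in H^\infty$.

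For any $f\in D_\alpha$, the multiplier property yields $\|fh\|_{D_\alpha}\le C\|f\|_{D_\alpha}$. Writing $fh' = (fh)' - f'h$ and using $|a-b|^2\le 2(|a|^2+|b|^2)$ together with $h\in H^\infty$,
\[\int_{\mathbb D}|f|^2|h'|^2(1-|z|^2)^\alpha dA \le 2\int_{\mathbb D}|(fh)'|^2(1-|z|^2)^\alpha dA + 2\|h\|_\infty^2\int_{\mathbb D}|f'|^2(1-|z|^2)^\alpha dA,\]
and each term on the right is bounded by a constant multiple of $\|f\|^2_{D_\alpha}$. This gives the Carleson measure property of $d\mu$, and Corollary \ref{Cesaro1} then yields (CB) for $M_\phi$ and $M_\phi^*$ also in this range.
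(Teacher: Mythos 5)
Your proof is correct, and for parts (a) and (b) it is exactly the paper's argument: the kernel estimates \eqref{kernel} absorb the weight $(1-|z|^2)^{\alpha}$, the hypothesis then gives the Carleson bound for $|\phi'/(1-\phi)|^2(1-|z|^2)^{\alpha}dA$, and Corollary \ref{Cesaro1} finishes. For part (c) the paper offers only a one-line justification (``$D_\alpha$ is a multiplicative algebra and $1-\phi$ is bounded from below''), whereas you supply a complete verification of the Carleson condition by using that $h=\log(1-\phi)$ is a multiplier of $D_\alpha$ and lies in $H^\infty$, and then splitting $fh'=(fh)'-f'h$. That is a valid route and arguably fills a gap the paper leaves to the reader. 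A slightly shorter alternative, closer in spirit to the paper's hint, is to note that for $-1<\alpha<0$ the coefficient Cauchy--Schwarz embedding $D_\alpha\hookrightarrow H^\infty$ makes \emph{every finite} positive measure a Carleson measure for $D_\alpha$, and the hypothesis $\log(1-\phi)\in D_\alpha$ says precisely that $|\phi'/(1-\phi)|^2(1-|z|^2)^{\alpha}dA$ has finite total mass; this avoids invoking the multiplier norm of $h$ altogether. Both arguments reduce to the same Corollary \ref{Cesaro1}, so the difference is only in how the Carleson property is certified in the range $\alpha<0$.
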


Before the proof, we would like to stress that since $D_\alpha$ spaces for negative values of $\alpha$ are formed by functions that are continuous to the boundary, the condition on part (c) actually implies that $\phi$ does not take the value $1$. Thus, this condition could already imply  a stronger property, when $\alpha <0$. As we will see later, this will already imply that $M_\phi$ is (ME), and from the properties of $M_z$ we already know that our condition cannot give (UKB).

\begin{proof}
In order to prove (a) and (b), using  (\ref{kernel}),  we obtain 
\begin{eqnarray*}
\frac{1}{\pi}\int_{\mathbb D}|f(z)|^2\left|\frac{\phi'(z)}{1-\phi(z)}\right|^2(1-|z|^2)^{\alpha} dA.
 \le  C \|f\|^2_{D_{\alpha}}.
\end{eqnarray*}

Thus $ |\frac{\phi'(z)}{1-\phi(z)}|^2(1-|z|^2)^{\alpha} dA $ is a  Carleson measure for $D_{\alpha}$. That means that we can apply Corollary \ref{Cesaro1}, and obtain the result.

(c) is a consequence of the fact that $D_\alpha$ is a multiplicative algebra and that $1-\phi$ is bounded from below.
\end{proof}

We consider the following example to be relevant:
\begin{example}
For $z\in \D$ and $\theta \ne 0$ define $\phi$ by \[\phi(z) = e^{i\theta } \cdot \frac{1-z}{2}\cdot e^{-\frac{1+z}{1-z}}.\] Then $M_{\phi}$ is Ces\`{a}ro bounded when acting on the Dirichlet space. To check this, notice $\|\phi\|_{\infty}\le 1$. It remains to see that $\int_{\mathbb D} \log\frac{1}{1- |z|^2} |\frac{\phi'(z)}{1- \phi(z)}|^2 dA $ is  bounded. We can deduce from the proof of \cite[Theorem 7.3]{GGM} that $\int_{\mathbb D} \log\frac{1}{1- |z|^2} |\phi'(z)|^2 dA $ is finite. Then we can use that $|1- \phi(z)|> c >0$ and obtain our claim.
\end{example}

\subsection{Mean ergodicity}

It is well known that in a reflexive Banach space, a Ces\`{a}ro bounded operator $T$ that satisfies $\frac{\|T^nx\|}{n} \rightarrow 0$,  for all $x$,
must be mean ergodic. This allows us to show the only implication between ergodic properties that we can reverse: mean ergodicity and Ces\`{a}ro boundedness coincide in our setting.

\begin{theorem}\label{TME} Let $\alpha \in (-1,1)$ and $\phi \in M_{D_\alpha}$.  Then $M_{\phi}$ is (ME) if and only if it is (CB).
\end{theorem}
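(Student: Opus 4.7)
The plan is to dispatch the easy direction $(\text{ME})\Rightarrow(\text{CB})$ via Banach--Steinhaus (pointwise convergence of $(M_n(M_\phi)f)$ forces uniform boundedness of $\{M_n(M_\phi)\}$), and for $(\text{CB})\Rightarrow(\text{ME})$ to invoke the criterion recalled just above the theorem: on a reflexive Banach space, a Ces\`aro bounded $T$ satisfying $\|T^n x\|/n\to 0$ for every $x$ is automatically mean ergodic. Since $D_\alpha$ is a Hilbert space (hence reflexive), the whole task reduces to proving
\[
\frac{\|M_\phi^n f\|_{D_\alpha}}{n}\longrightarrow 0 \qquad\text{for every }f\in D_\alpha,
\]
under the standing assumption that $M_\phi$ is (CB).

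If $\phi\equiv 1$ then $M_\phi=I$ and there is nothing to prove. Otherwise Theorem~\ref{TCB} forces $\|\phi\|_\infty\le 1$. Using the equivalent norm \eqref{eqn101} and the product rule $(\phi^n f)'=\phi^n f'+n\phi^{n-1}\phi' f$ together with $|\phi|\le 1$, I expect a constant $C>0$ such that
\[
\|M_\phi^n f\|_{D_\alpha}^{2}\le C\|f\|_{D_\alpha}^{2}+Cn^{2}\int_{\mathbb D}|\phi|^{2n-2}|\phi'|^{2}|f|^{2}(1-|z|^{2})^{\alpha}\,dA.
\]
After dividing by $n^{2}$, only the remaining integral needs to be driven to $0$. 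Its integrand converges to $0$ almost everywhere, since $\phi$ is non-constant with $|\phi|\le 1$, so $1-|\phi|^{2}$ is a nonzero real analytic function on $\mathbb D$, and thus the level set $\{|\phi|=1\}$ has area zero and $|\phi|^{2n-2}\to 0$ off of it.

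I would then close by dominated convergence, using $|\phi'|^{2}|f|^{2}(1-|z|^{2})^{\alpha}$ as the dominating function. The hard part is precisely this integrability, and it is the one step that really exploits the structure of the problem: $\phi\in M_{D_\alpha}$ gives $\phi f\in D_\alpha$, so $(\phi f)'=\phi' f+\phi f'\in L^{2}((1-|z|^{2})^{\alpha}\,dA)$; and $\|\phi\|_\infty\le 1$ puts $\phi f'$ in the same space; subtraction then yields $\phi' f\in L^{2}((1-|z|^{2})^{\alpha}\,dA)$ with norm controlled by $\|f\|_{D_\alpha}$. Both ingredients (being a multiplier and the bound $\|\phi\|_\infty\le 1$ extracted from (CB) via Theorem~\ref{TCB}) are essential, as Example~\ref{ejemplo1} shows that in a general Hilbert space (CB) alone does not imply $\|T^n x\|/n\to 0$; so an argument bypassing either ingredient is not to be expected.
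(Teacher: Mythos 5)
Your proposal is correct and follows essentially the same route as the paper: both directions are handled identically (uniform boundedness for the easy one; reflexivity plus Ces\`aro boundedness plus $\|M_\phi^n f\|/n\to 0$ for the converse), with the same product-rule decomposition of $(\phi^n f)'$ and the same dominated-convergence argument, dominating by $|f|^2|\phi'|^2(1-|z|^2)^{\alpha}$ whose integrability comes from the multiplier hypothesis exactly as you describe. Your explicit triangle-inequality justification that $\phi' f\in L^2((1-|z|^2)^{\alpha}\,dA)$ is the same Carleson-measure fact the paper invokes (and proves the same way in Theorem~\ref{TPB}), and your measure-zero argument for $\{|\phi|=1\}$ is a harmless substitute for the paper's appeal to the maximum modulus principle, which gives $|\phi|<1$ everywhere for non-constant $\phi$.
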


\begin{proof}
$(\Rightarrow)$ Any mean ergodic operator is Ces\`{a}ro bounded.

$(\Leftarrow)$ As mentioned above, since $M_{\phi}$ is Ces\`{a}ro bounded and the space is reflexive, it is sufficient to establish that  for all $f \in D_{\alpha}$, we have $\frac{\|M_{\phi}^nf\|}{n}\rightarrow 0$ (as $n\rightarrow \infty)$.

The (CB) property implies already that  $\|\phi\|_{\infty}\le 1 $, and if $|\phi(z)|=1$ at some $z\in \mathbb D$ then $\phi$ must be constant and $\left\|\frac{M_{\phi}^n f}{n}\right\|_{D_{\alpha}}\rightarrow 0$.

We can otherwise assume that $|\phi(z)|<1$ for all $z\in \mathbb D$. We want to study the quantity \[\left\|\frac{M_{\phi}^n f}{n}\right\|^2_{D_{\alpha}} =  \left|\frac{\phi^n(0) f(0)}{n}\right|^2 + \frac{1}{n^2\pi}\int_{\mathbb D} |(\phi^n f)'|^2(1-|z|^2)^{\alpha} dA.\]
The first term on the right-hand side decays exponentially and we do not need to worry about it any further.
To deal with the remainder, we make the derivative explicit and once more, split a square of the modulus of a sum in two pieces to bound our integral above by
\[\frac{2}{n^2\pi}\int_{\mathbb D} | \phi^{n} f' |^2(1-|z|^2)^{\alpha} dA+ \frac{2}{\pi}\int_{\mathbb D}|f|^2|\phi^{n-1} |^2 |\phi' |^2 (1-|z|^2)^{\alpha}dA.\]
The first of these integrals can be bounded by $\frac{2\|f\|^2_{D_\alpha}}{n^2}$ using that $|\phi|<1$. 

Now, under our hypotheses, we have 
that $\frac{\|M_{\phi}^n f\|}{n}\rightarrow 0$: Bear in mind Lebesgue dominated convergence Theorem. It can be applied since its integrand is vanishing (because $|\phi|<1$) and bounded above by $|f|^2| |\phi' |^2 (1-|z|^2)^{\alpha}$, which is a Carleson measure, since $\phi$ is a multiplier.
\end{proof}

The first immediate consequence of Theorem \ref{TME} comes from applying to our knowledge that the shift is (CB).

\begin{corollary}
For $\alpha >0$, $M_z$ is (ME) in  $D_\alpha$.
\end{corollary}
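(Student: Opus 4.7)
The plan is to invoke Theorem \ref{TME}, which reduces the task to showing that $M_z$ is (CB) in $D_\alpha$ for $\alpha > 0$. For the range $\alpha \in (0,1)$ the preceding discussion after Corollary \ref{Cesaro1} already delivers this: taking $\phi(z)=z$, we have $\|\phi\|_\infty = 1$ and $\phi' \equiv 1$, so the measure $|\phi'(z)/(1-\phi(z))|^2 (1-|z|^2)^\alpha \, dA = (1-|z|^2)^\alpha/|1-z|^2 \, dA$ is a Carleson measure for $D_\alpha$ by the inequality (\ref{inequality}). Corollary \ref{Cesaro1} then gives (CB), and Theorem \ref{TME} upgrades this to (ME).

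For the complementary range $\alpha \geq 1$, Theorem \ref{TME} is not stated, but here we do not need it: the symbol $\phi(z)=z$ satisfies $\|\phi\|_\infty = 1$, so by Theorem \ref{largealpha} the operator $M_z$ is (PB). Since $D_\alpha$ is a reflexive Banach space (in fact a Hilbert space), every power bounded operator on it is mean ergodic by the classical mean ergodic theorem.

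The main obstacle, if any, is merely bookkeeping across the two regimes and making sure the appeal to (\ref{inequality}) covers exactly the case $\phi(z)=z$ we need; no new estimates are required, since all the hard work is already packaged in Corollary \ref{Cesaro1}, Theorem \ref{largealpha}, and Theorem \ref{TME}.
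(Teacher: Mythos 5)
Your proof is correct and follows essentially the same route as the paper: the paper's one-line justification is precisely to combine Theorem \ref{TME} with the observation made right after Corollary \ref{Cesaro1} (via inequality \eqref{inequality} applied to $\phi(z)=z$, $\phi'\equiv 1$) that $M_z$ is (CB) in $D_\alpha$ for $\alpha>0$. Your separate treatment of $\alpha\ge 1$ via Theorem \ref{largealpha} is harmless extra bookkeeping; the paper's standing convention in that section is $\alpha\in(-1,1)$, so the intended content is exactly what your first paragraph establishes.
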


It is reasonable to search for an analogue of Theorem \ref{TME} for the adjoints of multiplication operators. We can get almost the same result, since the assumption on $\phi$ being a multiplier is now joined by an integral condition in the cases that $\alpha \in [0,1)$. The multiplier assumption on $\phi$ guarantees that $M^*_\phi$ is well defined.

\begin{theorem} Let  $-1 < \alpha <1$ be and  $\phi\in M_{D_\alpha}$. When $\alpha \in (0,1)$ assume moreover that   $\int_{\mathbb D} |\phi'(z)|^2 dA<\infty $ (that is, $\phi$ is in the classical Dirichlet space). When $\alpha=0$, assume  $\int_{\mathbb D} \log\frac{1}{1- |z|^2} |\phi'(z)|^2 dA<\infty $. Then $M^*_{\phi}$ is (ME) if and only if it is (CB).
\end{theorem}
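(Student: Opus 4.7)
The forward implication is immediate since mean ergodicity implies Ces\`aro boundedness in any Banach space. For the converse, I would follow the same template as Theorem~\ref{TME}: on the reflexive space $D_\alpha$, a Ces\`aro bounded operator is mean ergodic as soon as its $n$-th iterates divided by $n$ tend strongly to zero. So it is enough to prove that $\|(M_\phi^*)^n f\|/n \to 0$ for every $f\in D_\alpha$. The (CB) hypothesis again forces $\|\phi\|_\infty\le 1$, and if $\phi$ is identically constant then $M_\phi^*$ is a scalar multiple of the identity, for which mean ergodicity is a direct check. Otherwise $|\phi(z)|<1$ throughout $\D$ by the maximum modulus principle.

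Unlike $M_\phi^n f = \phi^n f$, the iterates $(M_\phi^*)^n f$ admit no convenient closed form, so I would pass to the dual side via
\[
\frac{\|(M_\phi^*)^n f\|}{n} \;=\; \frac{1}{n} \sup_{\|g\|_{D_\alpha}=1} \abs{\langle f,\phi^n g\rangle_{D_\alpha}}.
\]
Using the inner product associated with the equivalent norm \eqref{eqn101} together with the product rule $(\phi^n g)' = \phi^n g' + n\phi^{n-1}\phi' g$, the inner product $\langle f,\phi^n g\rangle$ splits into three pieces: a point-value term, a benign integral $\int_\D f'\,\overline{\phi^n g'}\,(1-|z|^2)^\alpha dA$, and a critical integral $n\int_\D f'\,\overline{\phi^{n-1}\phi' g}\,(1-|z|^2)^\alpha dA$. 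The first two, after invoking $|\phi|\le 1$ and Cauchy--Schwarz, are bounded uniformly in $g$ of unit norm by a constant times $\|f\|$, and so contribute negligibly after dividing by $n$.

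Everything then reduces to showing that, after Cauchy--Schwarz, the critical piece divided by $n$ is controlled by $\|f\|\cdot A_n(g)^{1/2}$, where
\[
A_n(g) \;:=\; \int_\D |\phi|^{2(n-1)}|\phi'|^2 |g|^2 (1-|z|^2)^\alpha\, dA,
\]
and that $\sup_{\|g\|_{D_\alpha}=1} A_n(g) \to 0$. Here the extra hypothesis on $\phi$ enters: by the pointwise kernel estimates \eqref{kernel}, one has $|g(z)|^2(1-|z|^2)^\alpha \lesssim \|g\|^2$ when $\alpha\in(0,1)$, $|g(z)|^2 \lesssim \|g\|^2\log(1/(1-|z|^2))$ when $\alpha=0$, and $|g(z)|^2\lesssim\|g\|^2$ via the continuous embedding $D_\alpha\hookrightarrow H^\infty$ when $\alpha\in(-1,0)$. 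Substituting these bounds strips $g$ from the integrand and leaves $\|g\|^2 \int_\D |\phi|^{2(n-1)}|\phi'|^2 w(z)\, dA$, where $w$ equals $1$, $\log(1/(1-|z|^2))$, or $(1-|z|^2)^\alpha$, respectively. The theorem's hypotheses are precisely that $|\phi'|^2 w$ is integrable (for negative $\alpha$ this follows from $\phi\in D_\alpha$, which is built into $\phi\in M_{D_\alpha}$), so dominated convergence together with $|\phi|<1$ pointwise forces the integral to $0$ uniformly in unit-norm $g$.

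The main obstacle I foresee is exactly extracting this uniformity in $g$: the kernel estimates \eqref{kernel} are the precise device that decouples $g$ from the $n$-dependent integrand and reduces matters to a single scalar dominated-convergence step. Without such a pointwise bound one would instead be confronted with a sequence of measures $|\phi|^{2(n-1)}|\phi'|^2(1-|z|^2)^\alpha dA$ without uniform Carleson control, for which $\sup_{\|g\|=1} A_n(g)$ need not decay. Once the uniform decay of $A_n(g)$ is in hand, the reflexive-space criterion invoked in the proof of Theorem~\ref{TME} closes the argument.
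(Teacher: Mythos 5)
Your proof is correct and follows essentially the same route as the paper's: reduce, via reflexivity and Ces\`aro boundedness, to showing $\|(M_{\phi}^{*})^n f\|/n \to 0$, split $(\phi^n g)'$ by the product rule, and kill the critical term $\int_{\mathbb D} |\phi|^{2(n-1)}|\phi'|^2|g|^2(1-|z|^2)^{\alpha}\,dA$ uniformly over unit-norm $g$ by combining the kernel estimates \eqref{kernel} (respectively the embedding into bounded functions for $\alpha<0$) with the integral hypotheses and dominated convergence. Your duality step $\|(M_{\phi}^{*})^n f\| = \sup_{\|g\|=1}|\langle f,\phi^n g\rangle|$ merely makes explicit the operator-norm identity $\|(M_{\phi}^{*})^n\| = \|M_{\phi}^n\|$ that the paper uses implicitly when it passes to ``the equivalent quantity'' $\|\phi^n f\|/n$.
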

\begin{proof}

Without lost of generality we can assume that  $\phi $ is not a unimodular constant. 

$(\Rightarrow)$ All mean ergodic operators are Ces\`{a}ro bounded.

$(\Leftarrow)$ Suppose now $M_{\phi}^{*}$ is Ces\`{a}ro bounded. Since the space is reflexive it is sufficient to show that  $\frac{{\|M_{\phi}^{*}}^n f\|}{n}\rightarrow 0$ as $n\rightarrow \infty$.

Since $\phi$ is not a unimodular constant, we already know that  $|\phi(z)|<1$ for all $z\in \mathbb D$.

We want to control the equivalent quantity \[\left\|\frac{M_{\phi}^n f}{n} \right\|^2_{D_{\alpha}} = \left\|\frac{\phi ^n f}{n} \right\|^2_{D_{\alpha}}  =  \left|\frac{\phi^n(0) f(0)}{n}\right|^2 + \frac{1}{n^2\pi}\int_{\mathbb D} |(\phi^n f)'|^2(1-|z|^2)^{\alpha} dA.\]
As usual, the exponential decay of the value at 0 of $\phi^n$ makes the first term irrelevant.
For the remainder, the product rule for the derivative yields an upper bound of 
\[ \frac{2}{n^2\pi}\int_{\mathbb D} | \phi^{n} f' |^2(1-|z|^2)^{\alpha} dA+ \frac{2}{\pi}\int_{\mathbb D}|f|^2|\phi^{n-1} |^2 |\phi' |^2 (1-|z|^2)^{\alpha}dA,\]
and then one can bound the first of these integrals, using that $\|\phi\|_\infty \leq 1$, by a constant times $\|f\|^2_{D_\alpha}$. The other term is bounded in each case using the integral conditions we assumed in the Theorem and using Lebesgue dominated convergence theorem. For $\alpha <0$, the integral condition needed is just that $\phi$ is a multiplier of the space.

Thus $M^*_{\phi}$ is indeed Ces\`{a}ro bounded and satisfies $\frac{\|M^{*n}_{\phi} \|}{n}\rightarrow 0$. Therefore, it is mean ergodic.
\end{proof}

We obtain a few immediate consequences, as in previous sections. First, by taking a fixed Carleson measure:

\begin{corollary}  Let  $-1< \alpha <1$, $\phi\in M_{D_\alpha}$ and $\|\phi\|_{\infty}\le 1 $. If $\alpha >0$, assume moreover that $\phi$ is also in the classical Dirichlet space. If $\alpha=0$ assume that $\int_{\mathbb D} \log\frac{1}{1- |z|^2} |\phi'(z)|^2 dA<\infty $. If  $ |\frac{\phi'(z)}{1-\phi(z)}|^2(1-|z|^2)^{\alpha} dA $ is a  Carleson measure for $D_{\alpha}$ then $M_{\phi}$ and $M^*_{\phi}$ are (ME).
\end{corollary}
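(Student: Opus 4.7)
The plan is to chain together three results already established in the paper, so that this corollary becomes a more or less immediate consequence. First I would use Corollary \ref{Cesaro1} to produce Ces\`aro boundedness: the hypotheses $\phi \in M_{D_\alpha}$, $\|\phi\|_\infty \le 1$ and the fact that $|\phi'(z)/(1-\phi(z))|^2(1-|z|^2)^\alpha dA$ is a Carleson measure for $D_\alpha$ are precisely those of that corollary, so both $M_\phi$ and $M^*_\phi$ are (CB).

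Next, I would invoke Theorem \ref{TME} to upgrade (CB) to (ME) for $M_\phi$: since $D_\alpha$ is reflexive and $\phi$ is a multiplier, that theorem gives the equivalence of (CB) and (ME) for $M_\phi$ with no further assumption. This deals immediately with the multiplication operator side.

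Finally, for the adjoint $M^*_\phi$ I would apply the preceding theorem on adjoints. The additional integral hypotheses assumed in the statement of this corollary are designed exactly to match the hypotheses of that theorem: for $\alpha \in (0,1)$ one needs $\int_{\mathbb D} |\phi'|^2 dA <\infty$, i.e.\ $\phi$ lies in the classical Dirichlet space, and for $\alpha=0$ one needs the logarithmically weighted integral $\int_{\mathbb D}\log\frac{1}{1-|z|^2}|\phi'(z)|^2 dA <\infty$; for $\alpha \in (-1,0)$ the multiplier assumption suffices on its own. Since (CB) for $M^*_\phi$ was already secured in the first step, the adjoint theorem yields (ME) for $M^*_\phi$ as well.

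There is essentially no obstacle here beyond verifying that the hypothesis lists match up; the corollary is a packaging of Corollary \ref{Cesaro1}, Theorem \ref{TME}, and the adjoint version of that theorem. If anything merits a sentence of care, it is making explicit that the separation into cases $\alpha>0$, $\alpha=0$, $\alpha<0$ in the statement corresponds one-to-one with the three cases of the adjoint theorem, so that no extra hypothesis beyond what is already stated is needed to complete the argument.
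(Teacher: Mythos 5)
Your proposal is correct and matches the paper's intent exactly: the paper states this corollary without proof as an immediate consequence, and the intended derivation is precisely your chain of Corollary \ref{Cesaro1} (to get (CB) for both $M_\phi$ and $M^*_\phi$), Theorem \ref{TME} (to upgrade to (ME) for $M_\phi$), and the adjoint theorem with its case-by-case integral hypotheses (to upgrade to (ME) for $M^*_\phi$). The hypothesis lists do line up as you claim, so nothing further is needed.
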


Then, we split the consequences of kernel estimates in terms of the value of $\alpha$:
\begin{corollary} Let $0< \alpha <1$, $\phi \in M_{D_\alpha}$ and $\|\phi\|_{\infty}\le 1 $. If $\int_{\mathbb D}  |\frac{\phi'(z)}{1- \phi(z)}|^2 dA < \infty$, then $M_{\phi}$ and $M^*_{\phi}$ are (ME).
\end{corollary}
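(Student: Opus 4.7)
The plan is to reduce this to the previous corollary by verifying its two extra hypotheses (that $\phi$ lies in the classical Dirichlet space, and that $|\phi'(z)/(1-\phi(z))|^2(1-|z|^2)^{\alpha} dA$ is a Carleson measure for $D_\alpha$), both of which will follow from the single integral assumption plus $\|\phi\|_\infty\le 1$.

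First I would observe that since $|\phi(z)|\le 1$ for every $z\in\mathbb D$, we have $|1-\phi(z)|\le 2$, so
\[
\int_{\mathbb D}|\phi'(z)|^2\,dA \;\le\; 4\int_{\mathbb D}\left|\frac{\phi'(z)}{1-\phi(z)}\right|^2 dA \;<\;\infty,
\]
which shows $\phi$ is in the classical Dirichlet space. This takes care of the auxiliary hypothesis imposed in the previous corollary for the range $0<\alpha<1$.

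Next I would check the Carleson condition on the measure $d\mu(z):=|\phi'(z)/(1-\phi(z))|^2(1-|z|^2)^{\alpha} dA$, using the same kernel-based pointwise bound that was exploited in Corollaries \ref{logarithmic} and in the proof of the (CB) analogue. For $0<\alpha<1$, the estimate \eqref{kernel} gives
\[
|f(z)|^2(1-|z|^2)^{\alpha} \;\le\; C\,\|f\|^2_{D_\alpha}
\]
for every $f\in D_\alpha$ and every $z\in\mathbb D$. Plugging this in,
\[
\int_{\mathbb D}|f(z)|^2\left|\frac{\phi'(z)}{1-\phi(z)}\right|^2(1-|z|^2)^{\alpha}dA \;\le\; C\,\|f\|^2_{D_\alpha}\int_{\mathbb D}\left|\frac{\phi'(z)}{1-\phi(z)}\right|^2 dA,
\]
which is finite by assumption. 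Hence $\mu$ is a Carleson measure for $D_\alpha$.

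With both hypotheses verified, the previous corollary applies directly and yields that $M_\phi$ and $M_\phi^*$ are mean ergodic. There is no real obstacle; the argument is a one-line application of the kernel estimate followed by an invocation of the preceding corollary, exactly parallel to the passage from Corollary \ref{powerbounded} to Corollary \ref{logarithmic}(a) in the power-boundedness setting.
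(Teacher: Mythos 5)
Your proposal is correct and takes the same route as the paper: the corollary is stated there precisely as the consequence of the preceding one obtained by applying the kernel estimate \eqref{kernel} for $0<\alpha<1$ to show the measure is Carleson. Your explicit check that $\phi$ lies in the classical Dirichlet space, via $|1-\phi|\le 2$, fills in a hypothesis the paper leaves implicit, and it is exactly what is needed.
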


For the classical Dirichlet space we obtain the following:
\begin{corollary}\label{coroerg} Let $\phi \in M_{D_0}$ and $\|\phi\|_{\infty}\le 1 $. If $\int_{\mathbb D}\log\frac{1}{1- |z|^2} |\frac{\phi'(z)}{1- \phi(z)}|^2 dA <\infty$, then $M_{\phi}$ and $M^*_{\phi}$ are (ME).
\end{corollary}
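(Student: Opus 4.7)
My plan is to follow exactly the template of the preceding corollary for $0<\alpha<1$, transposing it to the case $\alpha=0$ by using the sharper reproducing-kernel pointwise bound available in the classical Dirichlet space. Concretely, the estimate \eqref{kernel} for $\alpha=0$ gives $|f(z)|^2 \le C \log\frac{1}{1-|z|^2} \|f\|^2_{D_0}$, so upon multiplying by $|\phi'(z)/(1-\phi(z))|^2$ and integrating I obtain
\[
\int_{\mathbb D} |f(z)|^2 \left|\frac{\phi'(z)}{1-\phi(z)}\right|^2 dA \;\le\; C\|f\|^2_{D_0} \int_{\mathbb D} \log\frac{1}{1-|z|^2} \left|\frac{\phi'(z)}{1-\phi(z)}\right|^2 dA,
\]
and the right-hand integral is finite by hypothesis. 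This establishes that $\bigl|\frac{\phi'}{1-\phi}\bigr|^2 dA$ is a Carleson measure for $D_0$ (note that for $\alpha = 0$ the factor $(1-|z|^2)^{\alpha}$ equals $1$).

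With the Carleson measure property in hand, Corollary \ref{Cesaro1} applies directly and delivers that both $M_\phi$ and $M^*_\phi$ are Ces\`{a}ro bounded on $D_0$. For $M_\phi$, an application of Theorem \ref{TME} converts (CB) into (ME). For the adjoint $M^*_\phi$, I will invoke the theorem on (ME) versus (CB) for adjoints in the $\alpha=0$ case. Its only extra assumption is that $\int_{\mathbb D} \log\frac{1}{1-|z|^2} |\phi'(z)|^2 dA < \infty$, which I will verify from our hypothesis by the elementary bound $|1-\phi(z)| \le 1 + |\phi(z)| \le 2$, valid because $\|\phi\|_\infty \le 1$. This yields the pointwise inequality $|\phi'|^2 \le 4\,|\phi'/(1-\phi)|^2$, so integrating against $\log\frac{1}{1-|z|^2}$ shows that the required side condition is inherited from our hypothesis. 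Hence the adjoint theorem applies and gives (ME) of $M^*_\phi$.

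There is no substantial obstacle: every ingredient — the kernel estimate \eqref{kernel}, Corollary \ref{Cesaro1}, Theorem \ref{TME}, and its adjoint counterpart — has been established in the preceding pages. The only point deserving attention is the observation that the single hypothesis $\int \log\frac{1}{1-|z|^2}\,|\phi'/(1-\phi)|^2\,dA < \infty$ plays a double role: through the kernel pointwise bound it forces the Carleson condition that unlocks (CB), and through the trivial bound $|1-\phi|\le 2$ it forces the auxiliary integrability needed by the adjoint version of Theorem \ref{TME}.
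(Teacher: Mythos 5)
Your proof is correct and follows essentially the same route the paper intends for this (unproved, ``immediate consequence'') corollary: the kernel estimate \eqref{kernel} for $\alpha=0$ converts the hypothesis into the statement that $|\phi'/(1-\phi)|^2\,dA$ is a Carleson measure for $D_0$, after which Corollary \ref{Cesaro1}, Theorem \ref{TME} and its adjoint counterpart give (ME) for both operators. Your explicit check that the adjoint theorem's side condition $\int_{\mathbb D}\log\frac{1}{1-|z|^2}\,|\phi'(z)|^2\,dA<\infty$ follows from the single stated hypothesis via $|1-\phi|\le 2$ is a detail the paper leaves implicit, and it is exactly what is needed.
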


For $\alpha <0$, our condition can be expressed as $\phi$ avoiding the value $1$: 
\begin{corollary} Let $-1< \alpha <0$,  $\phi\in M_{D_{\alpha}}$ and $\|\phi\|_{\infty}\le 1 $. If $\log(1-\phi)\in D_{\alpha}$, then $M_{\phi}$ and $M^*_{\phi}$ are (ME).
\end{corollary}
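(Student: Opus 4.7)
The plan is to deduce this statement as a direct consequence of results already established in this subsection, without needing any new estimates. The key observation is that the hypothesis ``$\log(1-\phi)\in D_\alpha$ with $-1<\alpha<0$'' is precisely case (c) of the preceding Ces\`aro boundedness corollary. Hence that corollary already delivers, for free, that $M_\phi$ and $M_\phi^*$ are both (CB). The task that remains is only to upgrade (CB) to (ME).

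For $M_\phi$ itself, this upgrade is immediate from Theorem \ref{TME}, which gives the equivalence (CB) $\Leftrightarrow$ (ME) with no hypothesis beyond $\phi \in M_{D_\alpha}$. For $M_\phi^*$, I invoke the analogous adjoint version of the theorem. The crucial point here is to observe that the supplementary integrability assumptions in that theorem are explicitly imposed only when $\alpha\in[0,1)$; in the present range $\alpha\in(-1,0)$ the mere assumption $\phi\in M_{D_\alpha}$ suffices. Therefore the (CB) property already secured for $M_\phi^*$ promotes automatically to (ME).

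There is essentially no obstacle specific to this corollary: the real technical work is embedded in the chain of results being invoked. Namely, one must trust (i) the algebra structure of $D_\alpha$ for negative $\alpha$, combined with the uniform lower bound $|1-\phi|\ge c>0$ (valid because $D_\alpha\subset C(\overline{\mathbb D})$ for such $\alpha$ and $\phi$ avoids $1$), which converts $\log(1-\phi)\in D_\alpha$ into the Carleson measure hypothesis driving the (CB) corollary; and (ii) the reflexivity and dominated convergence arguments carried out in the proofs of Theorem \ref{TME} and its adjoint counterpart, noting that for $\alpha<0$ the dominated convergence step requires nothing beyond $\phi$ being a multiplier. Granting these, the corollary is an immediate chain (CB) $\Rightarrow$ (ME) applied to both $M_\phi$ and $M_\phi^*$.
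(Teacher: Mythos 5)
Your proposal is correct and follows essentially the same route as the paper: the hypothesis $\log(1-\phi)\in D_\alpha$ is exactly case (c) of the Ces\`aro boundedness corollary (via the algebra structure of $D_\alpha$ for $\alpha<0$ and the lower bound $|1-\phi|\ge c>0$), which yields (CB) for both operators, and then Theorem \ref{TME} and its adjoint counterpart (which for $\alpha<0$ indeed requires nothing beyond $\phi\in M_{D_\alpha}$) upgrade (CB) to (ME). No gaps.
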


A particular example where the previous condition can be easily checked is  given by $\phi(z) = e^{i\theta}\frac{1-z}{2}$ with $\theta \ne 0 $, and hence, such $M_{\phi}$ is (ME) in $D_\alpha$ for $\alpha \in (-1,0)$.

To conclude our study of mean ergodicity, we give an example that illustrates how the situation in the Dirichlet space varies from that in Hardy or Bergman spaces.  

\begin{example}
Let $\phi(z) = e^{i\theta} (\frac{1-z}{2})^{k} $ for some  $\theta \ne 0$ and $k\in \mathbb N$, $k \ge 1$. Then $M_{\phi}$ and $M^*_{\phi}$ acting on $D_0$ are (ME) but not (PB).
\end{example}

\begin{proof}
It is sufficient to prove the conditions of the Corollary \ref{coroerg}. To do so, notice $\|\phi\|_{\infty}\le 1 $. Moreover, it is well known that  $\int_{\mathbb D}  |1-z|^{2\beta}\log\frac{2}{1-|z|^2} dA $ is  bounded if $\beta >-1$ \cite[ p.1112]{GGM}. If we also use that $|1-\phi(z)|>c>0$ $\forall z\in \mathbb D$, we have  \[\int_{\mathbb D}\log \frac{1}{1-|z|^2} \left|\frac{\phi'(z)}{1- \phi(z)}\right|^2 dA <\infty.\]
On the other hand, since $(\frac{1-z}{2})^{m}=2^{-m}\displaystyle \sum _{k=1}^m \binom {m}{k}(-z)^k$, we can estimate the norm of $\phi$ in $D_0$. Indeed, the Chu-Vandermonde identity gives us that 
\[\|(\frac{1-z}{2})^{m}\|^2  = 2^{-2m}\displaystyle \sum _{k=1}^m \binom {m}{k}^2 (k+1) 
= 2^{-2m}\displaystyle \sum _{k=1}^m \binom {m}{k}\binom {m-1}{m-k}  (k+1).\] An application of the Stirling approximation formula yields
\[\|(\frac{1-z}{2})^{m}\|^2 \eqsim  2^{-2m}  m\binom {2m-1}{m}
 = \frac{(2m-1)!}{2^{2m}((m-1)!)^2} \eqsim  \sqrt m .\]

Now, since $\|M_{\phi}^n\| \ge \|{\phi}^n\|$ and as far as $ \|(\frac{1-z}{2})^{kn}\|^2 \ge C \sqrt n$, $M_{\phi}$ cannot be power bounded.
\end{proof}

\bigskip

In terms of uniform mean ergodicity, we may deduce a  sufficient condition:

\begin{corollary} Let $\phi \in M_{D_0}$ and $\|\phi\|_{\infty}\le 1 $. If $\int_{\mathbb D} \log\frac{1}{1- |z|^2} |\phi'(z)|^2 dA <\infty $, and $1\notin \overline{ \phi(\mathbb D)}$, then $M_{\phi}$  is uniform mean ergodic (in the Dirichlet space).
\end{corollary}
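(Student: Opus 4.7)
The plan is to show that $M_n(M_\phi)$ converges to $0$ in operator norm on $D_0$, which amounts to uniform mean ergodicity here since the only fixed point of $M_\phi$ is $0$ (indeed $M_\phi f=f$ forces $(1-\phi)f\equiv 0$, and $\phi\not\equiv 1$ by hypothesis). The assumption $1\notin\overline{\phi(\mathbb D)}$ provides $c>0$ with $|1-\phi(z)|\ge c$ throughout $\mathbb D$, and $\|\phi\|_{\infty}\le 1$ together with the maximum modulus principle forces $|\phi(z)|<1$ for every $z\in\mathbb D$, unless $\phi$ is a unimodular constant (a case in which $M_n(M_\phi)$ reduces to a scalar sequence that trivially converges to $0$).

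Since Cesàro averaging commutes with multiplication, $M_n(M_\phi)=M_{g_n}$ with
\[
g_n(z):=\frac{1-\phi(z)^{n+1}}{(n+1)(1-\phi(z))},
\]
satisfying the uniform bound $|g_n|\le 2/((n+1)c)$ and, by direct differentiation, $g_n'=\phi'(g_n-\phi^n)/(1-\phi)$. Using the equivalent norm \eqref{eqn101} and the product rule,
\[
\|M_{g_n}f\|_{D_0}^2\lesssim |g_n(0)f(0)|^2+\int_{\mathbb D}|g_nf'|^2\,dA+\int_{\mathbb D}|g_n'f|^2\,dA.
\]
The first two terms are each bounded by a constant multiple of $\|f\|_{D_0}^2/(n+1)^2$, via the uniform bound on $|g_n|$ and $\int|f'|^2\,dA\le\pi\|f\|_{D_0}^2$.

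For the remaining integral, I will use $|g_n'|^2\le 2c^{-2}|\phi'|^2\bigl(4/((n+1)c)^2+|\phi|^{2n}\bigr)$ together with the reproducing kernel estimate \eqref{kernel} in the form $|f(z)|^2\le C\log(e/(1-|z|^2))\|f\|_{D_0}^2$, obtaining
\[
\int_{\mathbb D}|g_n'f|^2\,dA\le \frac{C\|f\|_{D_0}^2}{c^2}\int_{\mathbb D}\log\frac{e}{1-|z|^2}\,|\phi'|^2\Bigl(\frac{1}{(n+1)^2}+|\phi|^{2n}\Bigr)dA.
\]
The weight $\log(e/(1-|z|^2))|\phi'|^2$ is integrable since its integral differs from the hypothesis integral by the ordinary Dirichlet integral of $\phi$, which is finite because $\phi\in M_{D_0}\subset D_0$. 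Thus the $(n+1)^{-2}$ part contributes $O((n+1)^{-2})\|f\|_{D_0}^2$, while the $|\phi|^{2n}$ part tends to $0$ uniformly in $f$ by Lebesgue's dominated convergence, with pointwise decay guaranteed by $|\phi(z)|<1$ in $\mathbb D$ and domination supplied by the integrable weight just mentioned. The main obstacle is precisely the control of this third integral: the interplay of the integral hypothesis on $\phi$ and the max-principle consequence $|\phi|<1$ on $\mathbb D$ is exactly what makes dominated convergence yield uniform-in-$f$ convergence. Combining the three estimates gives $\|M_{g_n}\|\to 0$, establishing uniform mean ergodicity.
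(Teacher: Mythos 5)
Your proof is correct, but it follows a genuinely different route from the paper's. The paper's argument is a two-line appeal to Lin's uniform ergodic theorem: the hypotheses give $\|M_\phi^n\|/n\to 0$ (by the same kernel-plus-dominated-convergence estimate used in the mean ergodicity section), and $1\notin\overline{\phi(\mathbb D)}=\sigma(M_\phi)$ makes $I-M_\phi$ invertible, so $(I-M_\phi)D_0=D_0$ is closed and \cite{Li} applies. You instead prove the stronger statement $\|M_n(M_\phi)\|\to 0$ directly, by writing the Ces\`aro mean as the multiplication operator $M_{g_n}$ with $g_n=\frac{1-\phi^{n+1}}{(n+1)(1-\phi)}$, exploiting the lower bound $|1-\phi|\ge c$ to control $|g_n|$ and $|g_n'|$, and then running the kernel estimate and dominated convergence on the $|\phi|^{2n}$ term; your identity $g_n'=\phi'(g_n-\phi^n)/(1-\phi)$ and the splitting of $|g_n'|^2$ are both correct, and the case of a unimodular constant is properly disposed of. What the paper's route buys is brevity and modularity (it recycles the $\|T^n\|/n\to 0$ computation and a classical theorem); what yours buys is a self-contained, elementary argument that avoids both Lin's theorem and the spectral identification $\sigma(M_\phi)=\overline{\phi(\mathbb D)}$ (you only use the pointwise bound $|1-\phi|\ge c$, which is literally the hypothesis), and it identifies the limit operator explicitly as $0$ with a partial rate of convergence. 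Both arguments ultimately rest on the same integrability of $\log\frac{e}{1-|z|^2}\,|\phi'|^2$ and the same dominated convergence step.
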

\begin{proof}
    Under our hypotheses,  $\frac{\|T^n\|}{n} \rightarrow 0$ and $(I-T)D_0$ is closed. Then, by \cite{Li},   $T$ is uniform mean ergodic.
\end{proof}

\begin{remark}
Our example above, $\phi(z) = e^{i\theta} (\frac{1-z}{2})^{k}$ with $\theta \ne 0$ and $k\in \mathbb N$, $k \ge 1$, satisfies all the hypotheses of this Corollary. Thus $M_{\phi}$ is a uniform mean ergodic bounded operator in the Dirichlet space that fails to be power bounded.
\end{remark}

\subsection{Uniformly Kreiss boundedness}

We continue by characterizing  bounded multiplication operators in weighted Dirichlet spaces that are uniformly Kreiss bounded. As in the previous subsections, whenever we assume that a function is a multiplier in a particular space, any conclusion about its operator theoretic properties will refer to the action over the same particular space. Notice also that unimodular constant functions define trivially (UKB) multiplication operators. Larger constants are never (CB) while smaller ones are automatically (PB). Thus, from this point, we find it more convenient to exclude constants from our treatment.
 
\begin{theorem} Let $-1 < \alpha <1$ and $\phi \in M_{D_\alpha}$ be a non-constant function. Denote by $\mu_{n,\lambda}$ the measure with density given by \[d\mu_{n,\lambda}:= \left|\left(\frac{1-(\lambda\phi)^{n+1}} {(n+1)(1-\lambda \phi)}\right)'\right|^2 (1-|z|^2)^{\alpha} dA.\] Then $M_{\phi}$ and $M^*_{\phi}$ are (UKB) if and only if $\|\phi\|_{\infty}\le 1 $ and
$\{\mu_{n,\lambda}\}_{n\in\NN}$ is  a UBSCM for $D_{\alpha}$ for any $|\lambda | =1$.
\end{theorem}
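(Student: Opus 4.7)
The plan is to mirror the proof of Theorem \ref{TCB}, replacing $\phi$ by $\lambda \phi$ and insisting that every estimate is uniform in $|\lambda|=1$. The key structural observation is that $\lambda M_\phi = M_{\lambda \phi}$, and therefore the Ces\`aro mean $M_n(\lambda M_\phi)$ is itself a multiplication operator, namely multiplication by
\[
g_{n,\lambda}(z) := \frac{1}{n+1}\sum_{k=0}^n (\lambda \phi(z))^k = \frac{1-(\lambda \phi(z))^{n+1}}{(n+1)(1-\lambda \phi(z))}.
\]
Under the assumption $\|\phi\|_\infty \le 1$ and $|\lambda|=1$, one immediately has the pointwise estimate $|g_{n,\lambda}(z)|\le 1$ on $\D$, which is the uniform analogue of the bound $|M_n(M_\phi)1|\le 1$ used in the proof of Theorem \ref{TCB}. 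As before, since $\|M_\phi\|=\|M_\phi^*\|$ and $(\lambda M_\phi)^* = \bar\lambda M_\phi^*$, it suffices to handle $M_\phi$.

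For the forward implication, I would first note that (UKB) forces $\|(\lambda M_\phi)^n\|/(n+1) \le 2C$ via identity \eqref{media}, and hence $\sigma(M_\phi)\subset \overline{\D}$; combined with $\sigma(M_\phi)=\overline{\phi(\D)}$ from \cite{BS}, this yields $\|\phi\|_\infty \le 1$. Then, for any $f\in D_\alpha$ and $|\lambda|=1$, I would apply the elementary inequality
\[
|f\, g_{n,\lambda}'|^2 \le 2\bigl(|(g_{n,\lambda} f)'|^2 + |g_{n,\lambda} f'|^2\bigr)
\]
and integrate against $(1-|z|^2)^{\alpha}dA$. The first term on the right is bounded by a constant (independent of $n,\lambda$) times $\|f\|^2_{D_\alpha}$ precisely by the (UKB) hypothesis applied to $M_n(\lambda M_\phi)f = g_{n,\lambda}f$, while the second is dominated by $\|f\|^2_{D_\alpha}$ using $|g_{n,\lambda}|\le 1$. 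This produces the UBSCM condition uniformly in $\lambda$.

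For the converse, I would compute $\|M_n(\lambda M_\phi)f\|^2_{D_\alpha}$ using the equivalent norm \eqref{eqn101} applied to $g_{n,\lambda}f$. Splitting the derivative as $(g_{n,\lambda}f)' = g_{n,\lambda}f' + g_{n,\lambda}'f$, I bound
\[
\|g_{n,\lambda} f\|^2_{D_\alpha} \lesssim |f(0)|^2 + \frac{2}{\pi}\int_\D |g_{n,\lambda} f'|^2(1-|z|^2)^\alpha dA + \frac{2}{\pi}\int_\D |f|^2 |g_{n,\lambda}'|^2(1-|z|^2)^\alpha dA.
\]
The first integral is controlled by $\|f\|^2_{D_\alpha}$ thanks to $|g_{n,\lambda}|\le 1$, while the second is exactly $\int_\D |f|^2\, d\mu_{n,\lambda}$ and is bounded by the UBSCM constant (uniform in $\lambda$) times $\|f\|^2_{D_\alpha}$. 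Together these give $\|M_n(\lambda M_\phi)\|\le C$ uniformly in $n$ and $\lambda$.

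The main obstacle is essentially administrative rather than conceptual: every estimate must be genuinely uniform in $\lambda$ on the unit circle, so the UBSCM hypothesis must be formulated (and used) with a constant independent of $\lambda$. The only place where one must be slightly careful is verifying that $|g_{n,\lambda}|\le 1$ holds for \emph{every} $|\lambda|=1$ simultaneously, which reduces to the triangle inequality applied to the geometric partial sum and does not require $1-\lambda\phi$ to stay away from $0$ since the apparent singularity is removable in $g_{n,\lambda}$.
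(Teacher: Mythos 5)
Your proposal is correct and follows essentially the same route as the paper's proof: both directions rest on identifying $M_n(\lambda M_\phi)$ with multiplication by the telescoped symbol $g_{n,\lambda}$, applying the product rule for the derivative in the equivalent norm \eqref{eqn101}, and bounding one term by the (UKB)/UBSCM hypothesis and the other by the pointwise bound $|g_{n,\lambda}|\le 1$. The only cosmetic difference is that you justify $\|\phi\|_\infty\le 1$ via the spectral radius estimate from \eqref{media} where the paper simply cites the spectral inclusion, and you make the uniformity in $\lambda$ and the removability of the apparent singularity explicit, which the paper leaves implicit.
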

\begin{proof}
It is sufficient to prove the characterization  for $M_{\phi}$, because being uniformly Kreiss bounded is preserved by the adjoint operation.

$(\Rightarrow) $ If the operator $M_{\phi}$ is (UKB) then the spectrum $\sigma (M_{\phi})$ is contained in $\overline{\mathbb D}$. Since $\sigma (M_{\phi})= \overline{\phi(\mathbb D)}$ then $\|\phi\|_{\infty} \le 1$.

In order to check that the sequence $\{\mu_{n,\lambda}\}$ is always actually a UBSCM for $D_{\alpha}$, we consider the embedding on an $f\in D_\alpha$. We want to bound the integral 
\[\frac{1}{\pi}\int_{\mathbb D} |f |^2 \left|\left(\frac{1-(\lambda\phi)^{n+1}}{(n+1)(1-\lambda \phi)}\right)'\right|^2 (1-|z|^2)^{\alpha}dA.\]
Notice that the derivative inside this integral can be replaced with that of $M_n (M_{\lambda\phi}1)$. Applying the product rule for the derivative, we can bound the integral above by 
\[\frac{2}{\pi}\int_{\mathbb D} |(M_n(\lambda M_{\phi} f))' |^2 (1-|z|^2)^{\alpha}dA 
+ \frac{2}{\pi}\int_{\mathbb D} |(M_n(\lambda M_{\phi}1) f' |^2 (1-|z|^2)^{\alpha}dA.\]
The first of these terms is bounded using the assumption that $M_\phi$ is (UKB). The other, using that $|\phi|\leq 1$.

$(\Leftarrow)$ In \cite[Corollary 3.2]{MSZ}, it is proved that an operator $T$ is  (UKB) if and only if there is a $C$ such that
$\|M_{n}(\lambda T)\| \le C \;\; \mbox{ for } |\lambda |=1  \mbox{ and } n=0,1,2, \dots$

Applied to our case, we want to bound the expression \[\|M_n(\lambda M_{\phi})f\|^2_{D_{\alpha}} =  \|M_n( M_{\lambda\phi})f\|^2_{D_{\alpha}}.\] As usual, the value at $0$ will not create any problems. For the remaining terms, the product rule for the derivative yields a bound from above again and this time it is given by
\[\frac{2}{\pi}\int_{\mathbb D} | M_n( M_{\lambda\phi}1) f' |^2(1-|z|^2)^{\alpha} dA + \frac{2}{\pi}\int_{\mathbb D} |(M_n M_{\lambda\phi}1)' f |^2 (1-|z|^2)^{\alpha}dA.\]
The fist of these integrals can be bounded by $\|f\|^2_{D_\alpha}$ using that $|\phi|\leq 1$. The other one corresponds exactly with the sequence of integrals that we can suppose bounded precisely because $\{\mu_{n, \lambda}$ are a UBSCM:

\[\frac{C}{\pi}\int_{\mathbb D} |f|^2|(\frac{1-(\lambda\phi)^{n+1}}{(n+1)(1-\lambda \phi)})'|^2(1-|z|^2)^{\alpha} dA\leq  C\|f\|^2_{D_{\alpha}}.\] This concludes the proof.
\end{proof}

When a few particular measures have a common Carleson bound, this will provide a simpler testing condition:

\begin{corollary} Let $-1 < \alpha <1$ and $\phi \in M_{D_\alpha}$.  Suppose that $\|\phi\|_{\infty}\le 1 $ and that the density $ |\frac{\phi'(z)}{1-\lambda\phi(z)}|^2(1-|z|^2)^{\alpha} dA $ defines a Carleson measure for $D_{\alpha}$ with a uniform embedding constant for all $|\lambda | =1$. Then $M_{\phi}$ and $M^*_{\phi}$ are (UKB).
\end{corollary}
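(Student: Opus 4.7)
The strategy is to verify the hypothesis of the characterization theorem stated just above: that the sequence $\{\mu_{n,\lambda}\}_{n\in\NN}$ is a UBSCM for $D_\alpha$ with a bound independent of both $n$ and $\lambda$. Since the hypothesis already gives a uniform-in-$\lambda$ Carleson bound for the measure with density $|\phi'/(1-\lambda\phi)|^2(1-|z|^2)^{\alpha}$, it is natural to try to control the density of $\mu_{n,\lambda}$ pointwise by a constant multiple of the latter.

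The main step is an algebraic identity that absorbs the $n$-dependence. I would work with the scalar function $h_n(w):=\frac{1-w^{n+1}}{(n+1)(1-w)}=\frac{1}{n+1}\sum_{k=0}^n w^k$. Multiplying by $(1-w)$ and differentiating gives the clean relation $(1-w)h_n'(w) = h_n(w)-w^n$. Because $h_n$ is the arithmetic mean of $1,w,\ldots,w^n$, the inequality $|h_n(w)|\le 1$ holds throughout $\overline{\mathbb D}$, so the identity immediately yields
\[|h_n'(w)|\le \frac{2}{|1-w|}\qquad (|w|\le 1, \ n\in\NN),\]
with a constant independent of $n$. This is what prevents the naive $O(n)$ growth one would get from differentiating $\sum_{k=0}^n w^k$ termwise.

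Next, since $\|\phi\|_\infty\le 1$, setting $w=\lambda\phi(z)$ keeps $|w|\le 1$, and the chain rule gives
\[\left|\left(\frac{1-(\lambda\phi)^{n+1}}{(n+1)(1-\lambda\phi)}\right)'(z)\right|=|h_n'(\lambda\phi(z))|\cdot|\phi'(z)|\le \frac{2|\phi'(z)|}{|1-\lambda\phi(z)|}.\]
Consequently $d\mu_{n,\lambda}(z)\le 4\,|\phi'(z)/(1-\lambda\phi(z))|^2(1-|z|^2)^\alpha dA$ pointwise. Testing against $|f|^2$ for $f\in D_\alpha$ and invoking the uniform Carleson hypothesis yields $\int|f|^2\,d\mu_{n,\lambda}\le C\|f\|^2_{D_\alpha}$ with $C$ depending on neither $n$ nor $\lambda$, so $\{\mu_{n,\lambda}\}$ is UBSCM uniformly in $\lambda$, and the preceding theorem delivers the conclusion for both $M_\phi$ and $M_\phi^*$.

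I do not anticipate any serious obstacle: the only delicate point is obtaining the identity $(1-w)h_n'(w)=h_n(w)-w^n$, which is precisely what reduces an $n$-dependent derivative to a bound of order $1/|1-w|$. Once this is in place, everything else is a routine Carleson measure computation paralleling the proofs of Corollary \ref{Cesaro1} and the preceding theorem.
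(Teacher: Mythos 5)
Your proof is correct and follows essentially the same route as the paper: verify the UBSCM hypothesis of the preceding characterization theorem by dominating the density of $\mu_{n,\lambda}$ pointwise by a constant multiple of $|\phi'/(1-\lambda\phi)|^2(1-|z|^2)^{\alpha}$, exactly as the paper does implicitly here and in its Corollary on Ces\`aro boundedness. Your identity $(1-w)h_n'(w)=h_n(w)-w^n$ and the resulting bound $|h_n'(w)|\le 2/|1-w|$ make explicit the key $n$-uniform derivative estimate that the paper's two-line proof leaves unstated.
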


\begin{proof}
Under these conditions, there is a $C$ such that
$\|M_{n}(\lambda M_{\phi})\| \le C \;\; \mbox{ for } |\lambda |=1  \mbox{ and } n=0,1,2, \dots$ Thus $ M_{\phi}$ is uniformly Kreiss.
\end{proof}

Whenever $\alpha \in [0,1)$ we will be able to give an even nicer testing condition. This follows from applying the reproducing kernel properties \eqref{kernel} as before:
\begin{corollary} Let $\phi \in M_{D_\alpha}$ with $0\le \alpha <1$ and suppose that $\|\phi\|_{\infty}\le 1 $.
\begin{enumerate}
\item[(a)] If $\alpha >0$ and $\int_{\mathbb D}|\frac{\phi'}{1-\lambda\phi}|^2 dA$ is uniformly bounded in $\lambda$ for $|\lambda|=1$, then $M_{\phi}$ and $M^*_{\phi}$ are (UKB).

\item[(b)] If $\alpha=0$ and $\int_{\mathbb D} \log\frac{1}{1- |z|^2} |\frac{\phi'(z)}{1- \lambda\phi(z)}|^2 dA $ is  uniformly bounded in $\lambda$ for $|\lambda|=1$, then $M_{\phi}$ and $M^*_{\phi}$ are (UKB).
\end{enumerate}
\end{corollary}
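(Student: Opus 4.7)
The plan is to reduce both parts to the previous corollary by using the reproducing kernel pointwise estimates collected in \eqref{kernel}. The previous corollary requires verifying that $\left|\frac{\phi'(z)}{1-\lambda\phi(z)}\right|^2 (1-|z|^2)^{\alpha}dA$ is a Carleson measure for $D_\alpha$ \emph{with an embedding constant that does not depend on $\lambda$ on the unit circle}. If this is secured, we are done immediately.

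For part (a), given $f \in D_\alpha$ with $\alpha > 0$, I would apply the kernel estimate $|f(z)|^2 \le C (1-|z|^2)^{-\alpha} \|f\|^2_{D_\alpha}$ pointwise inside the Carleson test integral, so that
\[
\int_{\mathbb D} |f(z)|^2 \left|\frac{\phi'(z)}{1-\lambda\phi(z)}\right|^2 (1-|z|^2)^{\alpha}\, dA \le C\|f\|^2_{D_\alpha} \int_{\mathbb D} \left|\frac{\phi'(z)}{1-\lambda\phi(z)}\right|^2 dA.
\]
The hypothesis tells us exactly that the right-hand side is $\le C'\|f\|^2_{D_\alpha}$ with a constant independent of $\lambda$, so the Carleson constants are uniformly bounded in $\lambda$, and the previous corollary yields (UKB).

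For part (b), the same strategy works with the $\alpha = 0$ kernel bound $|f(z)|^2 \le C \log\frac{1}{1-|z|^2}\|f\|^2_{D_0}$: the factor $(1-|z|^2)^{\alpha}$ in the Carleson density is simply $1$, and the logarithm from the kernel estimate matches exactly with the weight appearing in the hypothesis, giving
\[
\int_{\mathbb D} |f(z)|^2 \left|\frac{\phi'(z)}{1-\lambda\phi(z)}\right|^2 dA \le C\|f\|^2_{D_0} \int_{\mathbb D} \log\frac{1}{1-|z|^2}\left|\frac{\phi'(z)}{1-\lambda\phi(z)}\right|^2 dA,
\]
again with a uniform-in-$\lambda$ bound. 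So again the previous corollary applies.

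There is no real obstacle here: the whole point is that the hypothesis has been engineered so that the kernel estimates convert the stated integral condition directly into the Carleson-measure condition of the preceding corollary, with the uniformity in $\lambda$ transferring across the argument unchanged. The adjoint case requires no separate work since (UKB) is preserved by adjoints on Hilbert spaces, a fact that was already used in the proof of the preceding theorem.
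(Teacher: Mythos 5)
Your proposal is correct and follows exactly the route the paper intends: the paper gives no separate proof, stating only that the result ``follows from applying the reproducing kernel properties \eqref{kernel} as before,'' i.e.\ the pointwise bounds $|f(z)|^2 \le C(1-|z|^2)^{-\alpha}\|f\|^2_{D_\alpha}$ (resp.\ the logarithmic bound for $\alpha=0$) turn the stated integral hypothesis into the uniform-in-$\lambda$ Carleson condition of the preceding corollary. Your handling of the uniformity in $\lambda$ and of the adjoint via invariance of (UKB) under adjoints matches the paper's argument.
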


\begin{remark}
Recall that for an operator on a Hilbert space, (UKB) implies (ME) and thus, all of our criteria for uniform Kreiss boundedness also trivally imply mean ergodicity.
\end{remark}
Keep in mind the above remark while considering the following result, which was already mentioned as known in the previous Section but shows the potential applicability of our conditions:
\begin{corollary}
Let $\alpha >0$. Then $M^*_z$ is (UKB) in $D_\alpha$.
\end{corollary}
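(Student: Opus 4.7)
The statement covers all $\alpha > 0$. For $\alpha \geq 1$ the conclusion is already contained in Theorem~\ref{largealpha}: since $\|z\|_{\infty} = 1$, the symbol $\phi(z)=z$ immediately makes $M_z$ and $M_z^*$ (PB), hence (UKB). So the interesting range is $0 < \alpha < 1$, and for it I would apply the Corollary stating that if $\phi \in M_{D_{\alpha}}$ satisfies $\|\phi\|_{\infty}\leq 1$ and the density $|\phi'(z)/(1-\lambda\phi(z))|^2(1-|z|^2)^{\alpha}\,dA$ defines a Carleson measure for $D_{\alpha}$ with a constant uniform in $|\lambda|=1$, then $M_{\phi}$ and $M_{\phi}^{*}$ are (UKB). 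With $\phi(z)=z$ one has $\|\phi\|_{\infty}=1$ and $\phi'\equiv 1$, so the task reduces to verifying that the family
\[
d\mu_{\lambda}(z) := \frac{(1-|z|^2)^{\alpha}}{|1-\lambda z|^2}\,dA(z), \qquad |\lambda|=1,
\]
is uniformly Carleson for $D_{\alpha}$.

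Inequality~\eqref{inequality} provides exactly this bound in the special case $\lambda = 1$. To upgrade it to arbitrary $|\lambda|=1$, I would use the rotation invariance of the $D_{\alpha}$ norm: for $f \in D_{\alpha}$ set $g(w) := f(\bar{\lambda} w)$, so that $\|g\|_{D_{\alpha}} = \|f\|_{D_{\alpha}}$, and then make the area-preserving substitution $w = \lambda z$ to convert the integral of $|f|^2$ against $d\mu_{\lambda}$ into the integral of $|g|^2$ against $d\mu_{1}$. The desired uniform Carleson bound is then immediate from~\eqref{inequality}.

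The invoked Corollary delivers (UKB) for both $M_z$ and $M_z^*$, which in particular gives the claimed statement. The only point requiring attention, and the reason a slightly less streamlined corollary has to be used, is that part~(a) of the kernel-based Corollary immediately preceding the statement cannot be applied here: the integral $\int_{\D}|1-\lambda z|^{-2}\,dA$ diverges at the boundary point $z = \bar\lambda$. One must therefore keep the weight $(1-|z|^2)^{\alpha}$ inside the Carleson measure and invoke the weighted inequality~\eqref{inequality} directly. Beyond this bookkeeping point, no genuine obstacle is expected.
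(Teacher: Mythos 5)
Your proof is correct and follows essentially the same route as the paper: both reduce the claim to showing that $\frac{(1-|z|^2)^{\alpha}}{|1-\lambda z|^2}\,dA$ is a uniformly bounded family of Carleson measures via inequality~\eqref{inequality} and then invoke the uniform-Carleson testing corollary. You merely add two details the paper leaves implicit --- the rotation argument passing from $\lambda=1$ to general $|\lambda|=1$, and the separate (trivial) treatment of $\alpha\ge 1$ via Theorem~\ref{largealpha}, which is needed since the testing corollary is only stated for $\alpha<1$.
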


\begin{proof}

$\phi(z)=z$ satisfies  $\|\phi\|_{\infty}\le 1 $ and  using (\ref{inequality}), we can obtain that
$$
 \int_{\mathbb D}\frac{|f(z)|^2}{|1-\lambda z|^2}(1-|z|^2)^{\alpha} dA \le C\|f\|_{D_{\alpha}}, \; \; \forall \lambda \mbox{ with } |\lambda |=1.
 $$
Thus $\frac{(1-|z|^2)^{\alpha}}{|1-\lambda z|^2} dA $ is a Carleson measure for $D_{\alpha}$ with a uniform bound that works for all $|\lambda | =1$.
\end{proof}

\section{Further questions}\label{further}

We conclude now by presenting some problems that we leave open:

\bigskip

{\bf Problem 1:}
Does there exist a $\phi \in M_{D_0}$ defining a (UKB) operator $M_{\phi}$ that fails to be (PB) in the Dirichlet space?

\bigskip

In the present article, we have proved that if $\phi \in M_{D_\alpha}$ for some $\alpha \in (-1,1)$, then the mean ergodicity and Ces\`{a}ro boundedness of $M_{\phi}$ are equivalent. However, for an analogous result for $M^*_{\phi}$  we did need additional conditions. Thus, the most natural question is the following:
\bigskip

{\bf Problem 2:}
Let $\alpha\in (-1,1)$ and $\phi \in M_{D_\alpha}$. Is it true, then, that $M^*_{\phi}$ is (ME) if and only if $M_{\phi}$ is (CB)? Otherwise, for what functions $\phi$ may we have mean ergodicity for $M_{\phi}$ but not for  $M^*_{\phi}$?

\bigskip

One might follow other directions as well. A few promising ones include absolute Ces\`{a}ro boundedness, an extension to other families of spaces (including those $D_\alpha$ with $\alpha \leq -1$) or extending from multiplication and their adjoints to general Toeplitz operators.

\bigskip

 \bigskip
 
{ \small \noindent Antonio Bonilla, Departamento de An\'alisis Matem\'atico and Instituto de Matem\'aticas y Aplicaciones (IMAULL),  Universidad de La Laguna, C/Astrof\'{\i}sico Francisco S\'anchez, s/n, 38206 La Laguna, Tenerife, Spain\\
email: abonilla@ull.edu.es \\

\noindent Daniel Seco, Departamento de An\'alisis Matem\'atico and Instituto de Matem\'aticas y Aplicaciones (IMAULL),  Universidad de La Laguna, C/Astrof\'{\i}sico Francisco S\'anchez, s/n, 38206 La Laguna, Tenerife, Spain\\
email: dsecofor@ull.edu.es}

\end{document}